\documentclass[11pt,twoside,reqno,centertags,draft]{amsart}
\usepackage{amsfonts}
\usepackage{color,enumitem,graphicx}
\usepackage[colorlinks=true,urlcolor=blue,
citecolor=red,linkcolor=blue,linktocpage,pdfpagelabels,
bookmarksnumbered,bookmarksopen]{hyperref}

\setcounter{page}{1}
  \usepackage{amsmath,amsthm,amsfonts,amssymb}

  \pagestyle{myheadings}
\thispagestyle{empty}
    \textwidth = 6 true in
    \textheight = 9.    true in

  \voffset= -20 true pt
  \oddsidemargin=0true in
  \evensidemargin=0true in

\begin{document}
\title{ Classification of isolated singularities of nonnegative solutions to fractional semi-linear elliptic equations
and the existence results}
\date{}
\maketitle

\vspace{ -1\baselineskip}

{\small
\begin{center}

\medskip

  {\sc  Huyuan Chen\qquad Alexander Quaas}
 
\end{center}
}

\bigskip

\begin{quote}
{\bf Abstract.} In this paper, we classify the singularities of nonnegative solutions to fractional elliptic equation
\begin{equation}\label{eq 0.1}
  \arraycolsep=1pt
\begin{array}{lll}
 \displaystyle   (-\Delta)^\alpha    u=u^p\quad
 &{\rm in}\quad \Omega\setminus\{0\},\\[2mm]
 \phantom{ (-\Delta)^\alpha  }
 \displaystyle   u=0\quad
 &{\rm in}\quad \mathbb{R}^N\setminus\Omega,
\end{array}
\end{equation}
where  $p>1$,  $\Omega$ is a bounded, $C^2$ domain in $\mathbb{R}^N$ containing the origin, $N\ge2$ and
the fractional Laplacian $(-\Delta)^\alpha$ is defined in the principle value sense.
We obtain that  any classical solution $u$ of (\ref{eq 0.1}) is a weak solution of
\begin{equation}\label{eq 0.2}
  \arraycolsep=1pt
\begin{array}{lll}
 \displaystyle   (-\Delta)^\alpha    u=u^p+k\delta_0\quad
 &{\rm in}\quad \Omega,\\[2mm]
 \phantom{ (-\Delta)^\alpha  }
 \displaystyle   u=0\quad
 &{\rm in}\quad \mathbb{R}^N\setminus\Omega
\end{array}
\end{equation}
for some  $k\ge0$, where $\delta_0$ is the Dirac mass at the origin. In particular, when $p\ge \frac{N}{N-2\alpha}$, we have that $k=0$;
 when $p< \frac{N}{N-2\alpha}$, $u$ has removable singularity at the origin if $k=0$ and  if $k>0$,
$u$ satisfies
$$\lim_{x\to0} u(x)|x|^{N-2\alpha}=c_{N,\alpha}k,$$
where $c_{N,\alpha}>0$.

Furthermore,  when $p\in(1, \frac{N}{N-2\alpha})$,  we obtain that there exists  $k^*>0$ such that
 problem (\ref{eq 0.1}) has at least two positive solutions for $k<k^*$, a unique positive  solution for $k=k^*$
 and no positive solution for $k>k^*$.

\end{quote}

\renewcommand{\thefootnote}{}
\footnote{AMS Subject Classifications: 35J60, 35J20.}
\footnote{Key words: Classification of singularity, Fractional Laplacian, Mountain Pass Theorem, Dirac mass.}

\newcommand{\N}{\mathbb{N}}
\newcommand{\R}{\mathbb{R}}
\newcommand{\Z}{\mathbb{Z}}

\newcommand{\cA}{{\mathcal A}}
\newcommand{\cB}{{\mathcal B}}
\newcommand{\cC}{{\mathcal C}}
\newcommand{\cD}{{\mathcal D}}
\newcommand{\cE}{{\mathcal E}}
\newcommand{\cF}{{\mathcal F}}
\newcommand{\cG}{{\mathcal G}}
\newcommand{\cH}{{\mathcal H}}
\newcommand{\cI}{{\mathcal I}}
\newcommand{\cJ}{{\mathcal J}}
\newcommand{\cK}{{\mathcal K}}
\newcommand{\cL}{{\mathcal L}}
\newcommand{\cM}{{\mathcal M}}
\newcommand{\cN}{{\mathcal N}}
\newcommand{\cO}{{\mathcal O}}
\newcommand{\cP}{{\mathcal P}}
\newcommand{\cQ}{{\mathcal Q}}
\newcommand{\cR}{{\mathcal R}}
\newcommand{\cS}{{\mathcal S}}
\newcommand{\cT}{{\mathcal T}}
\newcommand{\cU}{{\mathcal U}}
\newcommand{\cV}{{\mathcal V}}
\newcommand{\cW}{{\mathcal W}}
\newcommand{\cX}{{\mathcal X}}
\newcommand{\cY}{{\mathcal Y}}
\newcommand{\cZ}{{\mathcal Z}}

\newcommand{\abs}[1]{\lvert#1\rvert}
\newcommand{\xabs}[1]{\left\lvert#1\right\rvert}
\newcommand{\norm}[1]{\lVert#1\rVert}

\newcommand{\loc}{\mathrm{loc}}
\newcommand{\p}{\partial}
\newcommand{\h}{\hskip 5mm}
\newcommand{\ti}{\widetilde}
\newcommand{\D}{\Delta}
\newcommand{\e}{\epsilon}
\newcommand{\bs}{\backslash}
\newcommand{\ep}{\emptyset}
\newcommand{\su}{\subset}
\newcommand{\ds}{\displaystyle}
\newcommand{\ld}{\lambda}
\newcommand{\vp}{\varphi}
\newcommand{\wpp}{W_0^{1,\ p}(\Omega)}
\newcommand{\ino}{\int_\Omega}
\newcommand{\bo}{\overline{\Omega}}
\newcommand{\ccc}{\cC_0^1(\bo)}
\newcommand{\iii}{\opint_{D_1}D_i}

\numberwithin{equation}{section}

\vskip 0.2cm \arraycolsep1.5pt
\newtheorem{lemma}{Lemma}[section]
\newtheorem{theorem}{Theorem}[section]
\newtheorem{definition}{Definition}[section]
\newtheorem{proposition}{Proposition}[section]
\newtheorem{remark}{Remark}[section]
\newtheorem{corollary}{Corollary}[section]

\setcounter{equation}{0}
\section{Introduction}

  Our   purpose of this paper is to classify the singularities of nonnegative solutions of fractional semi-linear elliptic problem
\begin{equation}\label{eq 1.1}
  \arraycolsep=1pt
\begin{array}{lll}
 \displaystyle   (-\Delta)^\alpha    u=u^p\quad
 &{\rm in}\quad \Omega\setminus\{0\},\\[2mm]
 \phantom{ (-\Delta)^\alpha  }
 \displaystyle   u=0\quad
 &{\rm in}\quad \R^N\setminus\Omega,
\end{array}
\end{equation}
where   $p>1$,  $\Omega$ is a $C^2$ bounded domain in $\mathbb{R} ^N$ containing the origin, $N\ge2$,
 the fractional Laplacian $(-\Delta)^\alpha$ is defined in the  principle value sense, i.e.
$$(-\Delta)^\alpha  u(x)=c_{N,\alpha}\lim_{\epsilon\to0^+} \int_{\R^N\setminus B_\epsilon(0) }\frac{ u(x)-
u(z)}{|x-z|^{N+2\alpha}}  dz.$$
Here $B_\epsilon(0)$ is the ball with radius $\epsilon$ centered at the origin and  $c_{N,\alpha}>0$ is the normalized constant, see \cite{NPV}.

When $\alpha=1$, $-\Delta$ is the well-known Laplace operator and  the related isolated singular problem
\begin{equation}\label{eq 1.4}
 \begin{array}{lll}
 \displaystyle
  -\Delta   u=u^p\quad  {\rm in}\quad \Omega\setminus\{0\}
  \\[2mm]
 \displaystyle
 u\ge0 \quad  {\rm in}\quad \Omega\setminus\{0\},\qquad
  u=0\quad
 {\rm on}\quad \partial\Omega
\end{array}
\end{equation}
 has been classified  by Lions in \cite{L} for $p\in (1,\frac{N}{N-2})$,
by  Aviles in \cite{A} for $p = \frac{N}{N-2}$, by Gidas and  Spruck in \cite{GS}
for $\frac{N}{N-2} < p < \frac{N+2}{N-2}$, by Caffarelli, Gidas and  Spruck in \cite{CGS} for
$p=\frac{N+2}{N-2}$. When $p\in (1,\frac{N}{N-2})$,  Lions in \cite{L} showed that any nonnegative  solution of (\ref{eq 1.4})
is a very weak solution of
  \begin{equation}\label{eq 1.5}
    \arraycolsep=1pt
\begin{array}{lll}
 \displaystyle    -\Delta    u=u^p+k\delta_0\quad
 &{\rm in}\quad \Omega,\\[2mm]
 \phantom{  -\Delta   }
 \displaystyle   u=0\quad
 &{\rm on}\quad  \partial\Omega
\end{array}
\end{equation}
for some $k\ge0$,   and further noted  that  there exists $k^*>0$ such that for $k\in(0,k^*)$, problem \eqref{eq 1.5} has at least two   solutions including the minimal solution and a Mountain Pass type solution; for $k=k^*$,  problem \eqref{eq 1.5} has a unique solution; there is no solution of  \eqref{eq 1.5} for $k>k^*$.
So the solution of (\ref{eq 1.4}) has either the singularity of $|x|^{2-N}$ or   removable singularity when
$p\in(1,\frac{N}{N-2})$. In contrast with   problem \eqref{eq 1.5} with source nonlinearity, V\'eron  in \cite{V0}
showed that   the semi-linear elliptic
equations with absorption terms
\begin{equation}\label{eq003}
  \arraycolsep=1pt
\begin{array}{lll}
 \displaystyle    -\Delta     u+u^p=0\quad
 &{\rm in}\quad \Omega\setminus\{0\},\\[2mm]
 \phantom{  -\Delta  +u^p }
 \displaystyle   u=0\quad
 &{\rm on}\quad  \partial\Omega,
\end{array}
\end{equation}
admits positive solutions, when $p\in(1,\frac{N}{N-2})$,  which satisfy
$${\rm  either}\ \, \lim_{x\to0}u(x)|x|^{N-2}=c_Nk\quad{\rm  or}\quad
\lim_{x\to0} u(x)|x|^{\frac{2}{p-1}}=c_p>0$$
for $k>0$, denoting by $u_k$ and $u_\infty$ respectively. Furthermore, $u_\infty$ is the limit of $\{u_k\}_k$ as $k\to+\infty$
and $u_k$ is a weak solution of
\begin{equation}\label{eq 1.6}
    \arraycolsep=1pt
\begin{array}{lll}
 \displaystyle    -\Delta    u+ u^p=k\delta_0\quad
 &{\rm in}\quad \Omega,\\[2mm]
 \phantom{  -\Delta + u^p  }
 \displaystyle   u=0\quad
 &{\rm on}\quad  \partial\Omega.
\end{array}
\end{equation}
 Such an object has been extended to  the equations with Radon measures or
boundary measure data  in   \cite{BB11,B12,GV,MV1,MV3} and more related topics see references \cite{BP2,BP,BV,V}.

When $\alpha\in(0,1)$, $(-\Delta)^\alpha$ is a non-local operator, which has been studied by  Caffarelli and Sivestre in \cite{CS1,CS2,CS3},    and  fractional equations with measures and absorption nonlinearity in type (\ref{eq 1.6}) have been studied by Chen and V\'eron in \cite{CV1,CV2}.  In the source nonlinearity case, Chen, Felmer and V\'eron in \cite{CFV} obtained one solution for
$$ \arraycolsep=1pt
\begin{array}{lll}
(-\Delta)^\alpha u=g(u)+\sigma\nu \quad & {\rm in}\quad\Omega,\\[2mm]
 \phantom{ (-\Delta)^\alpha  }
 u=0  \quad & {\rm in}\quad \R^N\setminus\Omega,
 \end{array}
$$
where $\sigma>0$ small, $\nu$ is a Radon measure and nonnegative function $g$ satisfies the integral subcritical condition
$$\int_1^{+\infty}g(s)s^{-1-\frac{N}{N-2\alpha}}\,ds<+\infty.$$

Our interest in this paper
is to classify the singularities of (\ref{eq 1.1}) and then to obtain the  existence of singular solutions of (\ref{eq 1.1}) by considering the  very weak solutions
of corresponding problem with Dirac mass.

The classification of singularities of nonnegative solutions for (\ref{eq 1.1}) states as follows.

\begin{theorem}\label{teo 0}
Assume that $p>1$ and $u$ is a nonnegative classical solution of (\ref{eq 1.1}).

     Then  $u\in L^p(\Omega)$ and
there exists $k\ge0$
  such that  $u$ is a very weak solution of
 \begin{equation}\label{eq 1.2}
    \arraycolsep=1pt
\begin{array}{lll}
 \displaystyle   (-\Delta)^\alpha    u=u^p+k\delta_0\quad
 &{\rm in}\quad \Omega,\\[2mm]
 \phantom{ (-\Delta)^\alpha  }
 \displaystyle   u=0\quad
 &{\rm in}\quad \R^N\setminus\Omega,
\end{array}
\end{equation}
that is, $u\in L^p$ and
 \begin{equation}\label{1.1}
     \int_\Omega [u (-\Delta)^\alpha \xi-u^p\xi]\, dx=k\xi(0),\quad \forall \xi\in C^\infty_c(\Omega),
\end{equation}
where $C^\infty_c(\Omega)$ is the space of all the functions in $C^\infty(\R^N)$ with the support in $\Omega$. Furthermore,

$(i)$\ \ When $p\ge \frac{N}{N-2\alpha}$, we have that $k=0$.

$(ii)$ When $p\in(1,\frac{N}{N-2\alpha})$,
  if $k=0$,   $u$ is a classical solution of
  \begin{equation}\label{eq 1.3}
  \arraycolsep=1pt
\begin{array}{lll}
 \displaystyle   (-\Delta)^\alpha    u=u^p\quad
 &{\rm in}\quad \Omega,\\[2mm]
 \phantom{ (-\Delta)^\alpha  }
 \displaystyle   u=0\quad
 &{\rm in}\quad \R^N\setminus\Omega;
\end{array}
\end{equation}
\qquad \quad if $k>0$, then $u$ satisfies
\begin{equation}\label{1.2}
 \lim_{x\to0} u(x)|x|^{N-2\alpha}=c_{N,\alpha} k.
\end{equation}

\end{theorem}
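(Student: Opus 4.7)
The plan is to adapt the Brezis--Lions classification for $-\Delta$ to the non-local operator $(-\Delta)^\alpha$, based on (i) pointwise comparison with a multiple of the Riesz kernel $|x|^{-(N-2\alpha)}$ (or a scaling-invariant estimate in the supercritical regime) and (ii) Schwartz's theorem that distributions supported at a single point are finite linear combinations of derivatives of the Dirac mass. The first step is an a priori pointwise estimate near the origin: in the subcritical regime $p<N/(N-2\alpha)$ a supersolution argument using a multiple of $|x|^{-(N-2\alpha)}$ and the comparison principle on $B_{R_0}\setminus\overline{B_\epsilon}$, letting $\epsilon\to0^+$, yields
\[
0\le u(x)\le C|x|^{-(N-2\alpha)},\qquad x\in B_{R_0}\setminus\{0\},
\]
while for $p\ge N/(N-2\alpha)$ the scaling-invariant Keller--Osserman type estimate gives $u(x)\le C|x|^{-2\alpha/(p-1)}$ (with a logarithmic refinement at the critical exponent). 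In every regime this is enough to conclude $u^p\in L^1(\Omega)$, equivalently $u\in L^p(\Omega)$.

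Next, I would define
\[
T(\xi):=\int_\Omega\bigl[u(-\Delta)^\alpha\xi-u^p\xi\bigr]\,dx,\qquad \xi\in C^\infty_c(\Omega),
\]
which is well defined thanks to Step~1 together with the standard bounds on $(-\Delta)^\alpha\xi$. For $\xi$ supported in $\Omega\setminus\{0\}$, truncating against a cut-off vanishing near $0$ and using the symmetry of $(-\Delta)^\alpha$ and the classical equation gives $T(\xi)=0$ in the limit, so $T$ is a distribution supported at the origin and hence $T=\sum_{|\beta|\le m}c_\beta D^\beta\delta_0$ by the Schwartz structure theorem. Testing with the rescaled functions $\xi_\lambda(x)=\psi(x/\lambda)$ for $\psi\in C^\infty_c(B_1)$ and letting $\lambda\to0^+$, the scaling identity $(-\Delta)^\alpha\xi_\lambda(x)=\lambda^{-2\alpha}((-\Delta)^\alpha\psi)(x/\lambda)$ combined with the decay bound on $u$ shows that $T(\xi_\lambda)$ cannot blow up like $\lambda^{-|\beta|}$, forcing $c_\beta=0$ for every $|\beta|\ge 1$; a Kato-type inequality for $(-\Delta)^\alpha$ applied to the nonnegative $u$ then gives $k:=c_0\ge 0$.

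The two regimes are treated separately. In the supercritical case $p\ge N/(N-2\alpha)$, assume for contradiction $k>0$: comparison with $k\,G_\alpha(\cdot,0)$, where $G_\alpha$ denotes the Dirichlet Green function of $(-\Delta)^\alpha$ in $\Omega$, forces $u(x)\ge\tfrac12 c_{N,\alpha}k|x|^{-(N-2\alpha)}$ in a punctured neighbourhood of the origin, whence $u^p(x)\gtrsim|x|^{-p(N-2\alpha)}$ with $p(N-2\alpha)\ge N$, contradicting $u^p\in L^1(\Omega)$; hence $k=0$. In the subcritical case $p<N/(N-2\alpha)$, if $k=0$ then the very weak identity $(-\Delta)^\alpha u=u^p$ holds on all of $\Omega$ with $u^p\in L^1$, and a bootstrap exploiting $p(N-2\alpha)<N$ iteratively improves the pointwise bound on $u$ and eventually yields $u\in L^\infty_{\rm loc}$, so fractional elliptic regularity turns $u$ into a classical solution across the origin. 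If $k>0$, decompose $u=k\,G_\alpha(\cdot,0)+w$, where $w$ is the very weak solution of $(-\Delta)^\alpha w=u^p$ with zero exterior data; Green-potential estimates give $w(x)=o(|x|^{-(N-2\alpha)})$ as $x\to0$, and combining this with the asymptotic $G_\alpha(x,0)|x|^{N-2\alpha}\to c_{N,\alpha}$ delivers \eqref{1.2}.

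The main technical obstacle I anticipate is Step~2: the non-locality of $(-\Delta)^\alpha$ obstructs any clean localisation near the singular point, so one must carefully justify the exchange of the principal value with the integration against $\xi$ multiplied by a radial cut-off and control the off-diagonal remainder. The pointwise decay bounds from Step~1 are precisely what is needed to make these remainders vanish in the limit $\epsilon\to0^+$ and to rule out the higher-order terms $c_\beta D^\beta\delta_0$ with $|\beta|\ge 1$ via the rescaling test.
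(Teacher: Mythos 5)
Your step 1 pointwise bound does not hold up, and this propagates into a gap in step 2.

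For the source equation $(-\Delta)^\alpha u = u^p$ the comparison argument you sketch runs the wrong way. Away from the origin a multiple $C\Gamma_0$ of the Riesz kernel is $\alpha$-harmonic, so $(-\Delta)^\alpha(C\Gamma_0 - u) = -u^p \le 0$: you cannot conclude $u \le C\Gamma_0$ in the punctured ball, because the sign needed by the maximum principle is the opposite one (this is exactly why, for the absorption problem of V\'eron, the analogous bound \emph{does} hold). Likewise the scaling-invariant estimate $u\lesssim|x|^{-2\alpha/(p-1)}$ is a Keller--Osserman phenomenon of absorption type, or a Gidas--Spruck type local estimate for sources which is far from elementary, is not obtained by a barrier on an annulus, and fails in the Sobolev-critical and supercritical range. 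The paper sidesteps all of this: Proposition~\ref{pr 1.1} proves only $u^p\in L^1(\Omega)$, and does so by contradiction (if $u^p\notin L^1$, solve $(-\Delta)^\alpha v_n = \chi_{\Omega\setminus B_{R_n}}u^p$, use the comparison $u+\Gamma_0\ge v_n$, which \emph{does} go the right way since $u+\Gamma_0$ is a supersolution of $v_n$'s equation, and derive $u+\Gamma_0\equiv+\infty$). The pointwise asymptotics \eqref{1.2} emerge only at the very end of the argument via a Green-potential bootstrap, and only in the subcritical regime.

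Because you do not actually have a pointwise decay rate, your step~2 is also short. With only $u\in L^p(\Omega)$ in hand, the rescaled test $\xi_\epsilon(x)=\zeta_a(x/\epsilon)$ produces $\bigl|\int u\,(-\Delta)^\alpha\xi_\epsilon\bigr| = \epsilon^{-2\alpha}\,o(1)$, so the Schwartz coefficients $k_a$ are forced to vanish only for $|a|\ge 2\alpha$, that is for $|a|\ge 2$; when $\alpha\in(\tfrac12,1)$ the first-order term $\sum_i k_i D_i\delta_0$ survives the scaling and must be ruled out by a separate argument. This is exactly the content of Proposition~\ref{pr 3.1}, which shows that a nonzero first-order coefficient forces $\mathbb{G}_\alpha\bigl[\sum_i k_iD_i\delta_0\bigr]$, and hence $u$, to change sign near the origin, contradicting $u\ge 0$. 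Your proposal does not account for this case; the sentence invoking a Kato inequality only addresses the nonnegativity of the zeroth-order coefficient $k$, not the vanishing of the first-order ones. The remaining pieces (killing $k$ when $p\ge\frac{N}{N-2\alpha}$ via $u^p\gtrsim|x|^{-N}\notin L^1$, the $L^p$-bootstrap to $u\in L^\infty$ when $k=0$, and the decomposition $u=k\,\mathbb{G}_\alpha[\delta_0]+\mathbb{G}_\alpha[u^p]$ to get \eqref{1.2} when $k>0$) do match the paper.
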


Notice that  for $\alpha=1$, by the local property of the Laplacian and Integration by Part formula,
the solution $u$ of (\ref{eq 1.4})  has the following essential estimate of the singularity of the average in sphere  of the corresponding solution
$$
 \bar u(r)\le \frac{c_1}{r^{N-2}},\qquad r>0\ \ {\rm small},
$$
where $c_1>0$ and $\bar u(r)=\int_{\partial B_r(0)} u(x)\, d\omega(x)$, then it is available  to apply the Schwartz's Theorem in \cite{S} to classify the singularity of solutions of
(\ref{eq 1.4}). However, for $\alpha\in(0,1)$, because of the nonlocal property of the fractional
Laplacian,  problem (\ref{eq 1.1}) can not be translated into ODE by the average sphere function.  The strategy to prove Theorem \ref{teo 0} is to derive $u\in L^p(\Omega)$ and to scale the typical test functions and by using the positiveness of the solution
to derive that
$$L(\xi):=\int_\Omega [u(-\Delta)^\alpha \xi-u^p\xi ]dx=k\xi(0),\quad \forall \xi\in C^\infty_c(\Omega).$$
We notice that $k=0$ in the super critical case, i.e. $p\ge \frac{N}{N-2\alpha}$, which means that the singularity of positive solution is not visible in the distribution sense.

From Theorem \ref{teo 0}, the  solution of (\ref{eq 1.1}) may have the singularity as $|x|^{2\alpha-N}$ or removable singularity at the origin.   Next we   consider the existence and nonexistence
singular solution of (\ref{eq 1.1}) by dealing with the very weak solutions to (\ref{eq 1.2}) when $p\in(1,\frac{N}{N-2\alpha})$.

 \begin{theorem}\label{teo 1}
Assume that $p\in(1,\frac{N}{N-2\alpha})$,  then there exists $k^*>0$,
  such that\\ $(i)$ for $ k\in(0, k^*)$, problem (\ref{eq 1.2}) admits a minimal positive solution $u_{k}$
and  a Mountain-Pass type solution $w_k>u_k$, both solutions are classical solutions of (\ref{eq 1.1}) and satisfy (\ref{1.2});\\
$(ii)$
for $k=k^*$, problem (\ref{eq 1.2}) admits a unique positive solution $u_{k}$ , which is a classical solution of (\ref{eq 1.1}) and satisfies (\ref{1.2});\\
$(iii)$
for $k\ge k^*$,  problem (\ref{eq 1.2}) admits no   solution.

\end{theorem}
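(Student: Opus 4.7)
The plan is to adapt the classical source--measure strategy of Lions to the fractional setting. For any very weak solution $u$ of (\ref{eq 1.2}), decompose $u = k G_\alpha(\cdot,0) + v$, where $G_\alpha(\cdot,y)$ denotes the Green function of $(-\Delta)^\alpha$ in $\Omega$ with pole at $y$ and zero exterior data, so that $G_\alpha(x,0)\sim c_{N,\alpha}|x|^{2\alpha-N}$ as $x\to 0$. Then $v$ must satisfy
$$(-\Delta)^\alpha v = \bigl(v + k G_\alpha(\cdot,0)\bigr)^p \ \text{in}\ \Omega,\qquad v = 0\ \text{in}\ \R^N\setminus\Omega,$$
and the subcritical hypothesis $p<\frac{N}{N-2\alpha}$ guarantees $G_\alpha(\cdot,0)^p\in L^1(\Omega)$, so this auxiliary equation is tractable.

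First I would construct the minimal solution $u_k$ for small $k$ by monotone iteration: let $v_0=0$ and define $v_{n+1}$ by $(-\Delta)^\alpha v_{n+1}=(v_n+kG_\alpha(\cdot,0))^p$ in $\Omega$ with $v_{n+1}=0$ outside. A super-solution of the form $\lambda G_\alpha(\cdot,0)$ with $\lambda$ suitably chosen bounds the iteration uniformly when $k$ is small, and the monotone limit yields $u_k=kG_\alpha(\cdot,0)+v_\infty$. Setting $k^*:=\sup\{k>0:(\ref{eq 1.2})\text{ has a positive very weak solution}\}$, monotonicity of the problem in $k$ provides the minimal $u_k$ for every $k\in(0,k^*)$. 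Finiteness $k^*<\infty$ follows by testing (\ref{eq 1.2}) against the first Dirichlet eigenfunction $\varphi_1$ of $(-\Delta)^\alpha$ in $\Omega$ with eigenvalue $\lambda_1$: the identity $\lambda_1\int_\Omega u\varphi_1\,dx=\int_\Omega u^p\varphi_1\,dx+k\varphi_1(0)$ combined with Young's inequality $s\varphi_1\le\frac{1}{p}s^p\varphi_1+C_p\varphi_1$ gives $k\varphi_1(0)\le C\|\varphi_1\|_{L^1(\Omega)}$.

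For the Mountain--Pass solution at $k\in(0,k^*)$, pick $k'\in(k,k^*)$ so that $u_{k'}$ is a strict super-solution at level $k$; writing a prospective second solution as $u=u_k+w$ with $0<w\le u_{k'}-u_k$, $w$ must solve $(-\Delta)^\alpha w=(u_k+w)^p-u_k^p$. After truncating the nonlinearity above $u_{k'}-u_k$ this is the Euler equation of
$$J(w)=\frac12\int_{\R^N}\bigl|(-\Delta)^{\alpha/2}w\bigr|^2dx-\int_\Omega\Bigl[\tfrac{(u_k+w_+)^{p+1}}{p+1}-u_k^p w_+-\tfrac{u_k^{p+1}}{p+1}\Bigr]dx$$
on $H_0^\alpha(\Omega)$. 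Since $p+1<\frac{2N-2\alpha}{N-2\alpha}<\frac{2N}{N-2\alpha}=:2^*_\alpha$, the nonlinearity is strictly Sobolev--subcritical, so Palais--Smale compactness holds; mountain-pass geometry is furnished by the super-solution barrier, yielding a nontrivial critical point $w_k$ strictly positive by the fractional maximum principle. For $k=k^*$, take $k_n\nearrow k^*$ and pass to the monotone limit of $\{u_{k_n}\}$, with the uniform $L^1$ bound from the eigenfunction test supplying the integrability needed to pass to the limit in (\ref{1.1}); nonexistence for $k>k^*$ is immediate from the definition of $k^*$.

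The asymptotic (\ref{1.2}) follows from $u\ge kG_\alpha(\cdot,0)$ by comparison, which gives $\liminf_{x\to 0}u(x)|x|^{N-2\alpha}\ge c_{N,\alpha}k$, combined with the fact that $v=u-kG_\alpha(\cdot,0)$ solves $(-\Delta)^\alpha v=u^p\in L^1(\Omega)$ weakly with zero exterior data, hence is bounded near $0$ by standard Green-function estimates, yielding the matching upper bound. The main anticipated obstacle is proving Palais--Smale compactness in the presence of the singular background $u_k(x)\sim c_{N,\alpha}k|x|^{2\alpha-N}$, since the linearized problem carries the singular weight $u_k^{p-1}$; here the strict subcriticality $p+1<2^*_\alpha$ together with the $L^\infty$ truncation by the super-solution $u_{k'}$ are precisely what is needed to close the compactness argument.
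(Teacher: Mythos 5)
Your framework -- write $u=k\mathbb{G}_\alpha[\delta_0]+v$, build the minimal solution by monotone iteration with a super-solution of the form $\lambda\mathbb{G}_\alpha[\delta_0]$, bound $k^*$ by testing against the first eigenfunction, and obtain a second solution by the mountain-pass theorem applied to $(-\Delta)^\alpha w=(u_k+w_+)^p-u_k^p$ -- is essentially the route the paper follows. However, there are two substantive gaps and one incorrect claim.

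First, the mountain-pass geometry is \emph{not} supplied by the super-solution barrier or by the subcriticality $p+1<\frac{2N}{N-2\alpha}$. The issue is entirely in the quadratic part of the functional near the origin: for $\norm{w}_\alpha$ small one needs
$\frac12\norm{w}_\alpha^2-\frac{p}{2}\int_\Omega u_k^{p-1}w^2\,dx$
to be coercive, and since $u_k^{p-1}\sim k^{p-1}|x|^{-(N-2\alpha)(p-1)}$ is a singular weight this is exactly the statement that the minimal solution is \emph{stable}. Truncating the nonlinearity above $u_{k'}-u_k$ does nothing to the second-order behaviour at $w=0$, so it cannot produce the local minimum at $0$. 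The paper proves a quantitative stability inequality (Proposition 4.2): for every $\xi\in H^\alpha_0(\Omega)\setminus\{0\}$,
$\norm{\xi}_\alpha^2-p\int_\Omega u_k^{p-1}\xi^2\,dx\ge c\bigl((k^*)^{\frac{p-1}{p}}-k^{\frac{p-1}{p}}\bigr)\norm{\xi}_\alpha^2$,
derived by comparing $u_k$ with $l_0u_{k'}$ for $k'<k^*$, and this estimate is used both for the mountain-pass geometry and for the Palais--Smale coercivity. Without such a stability statement your argument is incomplete.

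Second, Theorem 1.2(ii) asserts \emph{uniqueness} of the solution at $k=k^*$, which your proposal does not address at all. The paper deduces this by showing $u_{k^*}$ is semi-stable (passing to the limit in the stability inequality) and then, supposing a strictly larger solution $u>u_{k^*}$ exists, testing the difference against the first eigenfunction of the linearized operator at $u_{k^*}$; the elementary convexity inequality $(a+b)^p>a^p+pa^{p-1}b$ produces a contradiction with semi-stability.

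Finally, the claim that $v=u-k\mathbb{G}_\alpha[\delta_0]$ ``is bounded near $0$ by standard Green-function estimates'' because $u^p\in L^1(\Omega)$ is false: $\mathbb{G}_\alpha[f]$ for $f\in L^1$ need not be bounded. The correct asymptotics (\ref{1.2}) are obtained in Theorem \ref{teo 0} by a Moser-type bootstrap (iterating $s_m\mapsto\frac{1}{p}\frac{Ns_{m-1}}{Np-2\alpha s_{m-1}}$ using Proposition \ref{embedding}) until $u^p$ lands in $L^s$ with $s>\frac{N}{2\alpha}$; once $u_k$ and $w_k+u_k$ are known to be classical solutions of (\ref{eq 1.1}) this simply follows by invoking Theorem \ref{teo 0}, which is how you should close that point.
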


We remark that the minimal positive solution of (\ref{eq 1.1}) is derived by iterating an increasing sequence  $\{v_n\}_n$ defined by
$$v_0= k \mathbb{G}_\alpha[\delta_0], \qquad v_n  = \mathbb{G}_\alpha[v_{n-1}^p]+ k \mathbb{G}_\alpha[\delta_0],$$
where  $\mathbb{G}_\alpha[\cdot]$ is the
Green operator defined as
$$\mathbb{G}_\alpha[f](x)=\int_{\Omega} G_\alpha(x,y)f(y)dy $$
and   $G_\alpha$ is  the Green kernel of $ (-\Delta)^\alpha $ in $\Omega\times\Omega$. The properties of Green's function see Theorem 1.1 in \cite{CS}.
To insure the convergence of the sequence  $\{v_n\}_n$, we need to construct a suitable barrier function by using the estimate
$$
 \mathbb{G}_\alpha[\mathbb{G}_\alpha^p[\delta_0]]\le c_2\mathbb{G}_\alpha[\delta_0]\quad {\rm in}\quad \Omega\setminus\{0\},
$$
where $c_2>0$.  By the analysis the stability of the minimal solution, we deduce the existence of the very weak solution in the case that
$k=k^*$ and for $k\in(0,k^*)$, we construct Mountain Pass solution $\upsilon_k$ for the problem
$$\arraycolsep=1pt
\begin{array}{lll}
 \displaystyle   (-\Delta)^\alpha   u= (u_{k}+u_+)^{p}- u_{k}^p \quad
 &{\rm in}\quad \Omega,\\[2mm]
 \phantom{  (-\Delta)^\alpha}
 u =0 &{\rm in}\quad \R^N\setminus\Omega
\end{array}
$$
and then the Mountain Pass type solution $\upsilon_k+u_k$ is a solution of (\ref{eq 1.2}).

The paper is organized as follows. In Section 2, we  show the integrability of the solution $u$ of (\ref{eq 1.1}) and the isoltated support of operator generated by $(-\Delta)^\alpha u-u^p$.   Section  3 is devoted
to do classification of the singularities of (\ref{eq 1.1}). Finally, in Section 4 we prove the existence and nonexistence of very weak solutions
 of problem (\ref{eq 1.2}).

 \section{Preliminary results }

 We start our analysis from the integrality of  nonnegative solution $u$ to the fractional problem (\ref{eq 1.1}).
 In what follows,  denote by $c_i$ the positive constant with $i\in\N$,
 $G_\alpha$ denotes the Green's function of $(-\Delta)^\alpha$ in $\Omega\times\Omega$ and
$\mathbb{G}_\alpha[\cdot]$ is the
Green operator defined as
$$\mathbb{G}_\alpha[f](x)=\int_{\Omega} G_\alpha(x,y)f(y)dy. $$

 \begin{proposition}\label{pr 1.1}
 Assume that $p>1$ and $u$ is a nonnegative classical solution of (\ref{eq 1.1}).
  Then
\begin{equation}\label{lp}
 u\in L^p(\Omega).
\end{equation}

 \end{proposition}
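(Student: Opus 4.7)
The plan is to combine a Green-potential comparison with a pairing against the first Dirichlet eigenfunction of $(-\Delta)^\alpha$ on $\Omega$. Since $u$ is a classical solution vanishing on $\R^N\setminus\Omega$, it is continuous and bounded on every compact subset of $\overline{\Omega}\setminus\{0\}$; in particular $u^p$ is bounded on $\Omega\setminus B_{r_0}(0)$ for any small $r_0>0$ with $\overline{B_{r_0}(0)}\subset\Omega$, so the task reduces to controlling $\int_{B_{r_0}(0)} u^p\,dx$. In addition, for the principal value integral defining $(-\Delta)^\alpha u(x)$ to converge at points $x\neq 0$, $u$ must be locally integrable near $0$; combined with boundedness away from $0$ this already forces $u\in L^1(\Omega)$.

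For $R\in(0,r_0)$ set $w_R := \mathbb{G}_\alpha[u^p \chi_{\Omega\setminus B_R(0)}]$. Since $u^p \chi_{\Omega\setminus B_R(0)}\in L^\infty(\Omega)$, $w_R$ is a bounded nonnegative solution of $(-\Delta)^\alpha w_R = u^p\chi_{\Omega\setminus B_R(0)}$ in $\Omega$ with zero exterior data. On $\Omega\setminus\overline{B_R(0)}$ one has $(-\Delta)^\alpha(u-w_R)=0$, while on $B_R(0)\setminus\{0\}$ one has $(-\Delta)^\alpha(u-w_R)=u^p\ge 0$; together with $u-w_R\equiv 0$ on $\R^N\setminus\Omega$, the weak maximum principle for $(-\Delta)^\alpha$ on $\Omega\setminus\{0\}$ yields the comparison $w_R\le u$ in $\Omega\setminus\{0\}$.

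Now let $\phi_1>0$ denote the first Dirichlet eigenfunction of $(-\Delta)^\alpha$ on $\Omega$ with eigenvalue $\lambda_1>0$; both $w_R$ and $\phi_1$ lie in the fractional energy space with zero exterior data, so the symmetry of the fractional bilinear form gives
\[
 \lambda_1 \int_\Omega w_R\, \phi_1\, dx \;=\; \int_\Omega (-\Delta)^\alpha w_R \cdot \phi_1\, dx \;=\; \int_\Omega u^p \chi_{\Omega\setminus B_R(0)} \phi_1\, dx.
\]
Using $w_R\le u$ and $\phi_1\in L^\infty(\overline{\Omega})$, the right hand side is bounded by $\lambda_1\|\phi_1\|_\infty\|u\|_{L^1(\Omega)}$ uniformly in $R$. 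Letting $R\to 0^+$ and invoking monotone convergence, $\int_\Omega u^p \phi_1\, dx<\infty$. Since $\phi_1$ is continuous and strictly positive in $\Omega$, there exists $c_0>0$ with $\phi_1\ge c_0$ on the compact set $\overline{B_{r_0}(0)}\subset\Omega$, so $\int_{B_{r_0}(0)} u^p\, dx \le c_0^{-1}\int_\Omega u^p\phi_1\, dx<\infty$; combined with the bound on $\Omega\setminus B_{r_0}(0)$, this proves $u\in L^p(\Omega)$.

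The main obstacle is the comparison $w_R\le u$, which must accommodate the possibly singular behavior of $u$ at the origin within the maximum principle framework. The cleanest justification is that the singleton $\{0\}$ is polar for $(-\Delta)^\alpha$, since $2\alpha<N$ in the present setting ($N\ge 2$, $\alpha\in(0,1)$), so it may be removed from the domain without affecting the weak maximum principle; alternatively one may compare $w_R$ against the truncations $\min(u,M)$ via the standard maximum principle on $\Omega$ and pass to the limit $M\to\infty$ by monotone convergence.
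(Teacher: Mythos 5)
Your proof takes a genuinely different route from the paper's. The paper argues by contradiction: assuming $u^p\notin L^1(\Omega)$, it builds potentials $v_n=\mathbb{G}_\alpha[\chi_{\Omega\setminus B_{R_n}(0)}u^p]$ carrying mass $n$ outside shrinking balls, proves $v_n\le u+\Gamma_0$ by a barrier comparison (the fundamental solution $\Gamma_0$ blows up at the origin and therefore trivially dominates the bounded $v_n$ near $0$, so the comparison needs only a standard exterior--domain principle and no removability statement at the puncture), and then derives a pointwise contradiction from the lower bound $G_\alpha(x,y)\ge 1$ near the origin, which forces $v_n(x)\to\infty$. You instead prove a direct bound: with $w_R=\mathbb{G}_\alpha[\chi_{\Omega\setminus B_R(0)}u^p]\le u$ and a pairing against the first Dirichlet eigenfunction, you get $\int_\Omega u^p\chi_{\Omega\setminus B_R}\varphi_1\,dx=\lambda_1\int_\Omega w_R\varphi_1\,dx\le \lambda_1\|\varphi_1\|_\infty\|u\|_{L^1(\Omega)}$ uniformly in $R$, then pass to the limit by monotone convergence. (Your eigenfunction test is close in spirit to the pairing the paper later uses to bound $k^*$.) Once the comparison $w_R\le u$ is in hand, your argument is shorter and cleaner, and it directly produces a quantitative bound on $\|u^p\varphi_1\|_{L^1}$ rather than a contradiction.

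The crux, however, is exactly the comparison $w_R\le u$, and your justification there is thinner than it should be. The relevant function $v=u-w_R$ satisfies $(-\Delta)^\alpha v\ge0$ only in $\Omega\setminus\{0\}$, and the ordinary weak maximum principle for the fractional Dirichlet problem on the open set $\Omega\setminus\{0\}$ needs $v\ge0$ on the whole complement, including $\{0\}$, where you have no information; indeed $v=-G_\alpha(\cdot,0)$ is a counterexample to the naive statement. What rescues the argument is that $v$ is bounded below (by $-\|w_R\|_\infty$) and $\{0\}$ has zero $\alpha$-capacity, so one can run the comparison on $\Omega\setminus\overline{B_\rho}$ against $-\|w_R\|_\infty$ times the $\alpha$-harmonic measure of $\overline{B_\rho}$, then send $\rho\to0$. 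That removability step for bounded-below supersolutions is correct but deserves to be spelled out; it is precisely the part of the argument the paper circumvents by adding $\Gamma_0$. By contrast, your fallback of comparing $w_R$ against $\min(u,M)$ does not work as stated: on the set $\{u>M\}$ the truncation is constant, so $(-\Delta)^\alpha\min(u,M)$ is not bounded below by $u^p\chi_{\Omega\setminus B_R}$ there, and $\min(u,M)-w_R$ is not a supersolution, so the ``standard maximum principle on $\Omega$'' cannot be applied to it.
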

\begin{proof}
 If $u^p\not\in L^1(\Omega)$, then  it implies by $u\in L^\infty_{loc}(\Omega\setminus\{0\})$
that
$$\lim_{r\to0^+}\int_{\Omega\setminus B_r(0)} u^p\,dx=+\infty.$$
 So for any $r>0$,  there exist decreasing sequence $\{R_n\}_n$ such that $R_n\in(0,r)$,
$\lim_{n\to\infty}R_n=0$ and
\begin{equation}\label{2.0}
 \int_{B_r(0)\setminus B_{R_n}(0)} u^p\,dx=n.
\end{equation}
Let $v_n$ be the solution of
$$
  \arraycolsep=1pt
\begin{array}{lll}
 \displaystyle   (-\Delta)^\alpha   v_n=\chi_{\Omega\setminus B_{R_n}(0)}u^p\quad
 &{\rm in}\quad \Omega,\\[2mm]
 \phantom{ (-\Delta)^\alpha  }
 \displaystyle   v_n=0\quad
 &{\rm in}\quad \R^N\setminus\Omega,
\end{array}
$$
where $\chi_{O}=1$ in $O$ and $\chi_{O}=0$ in $\R^N\setminus O$ for any domain $O$ in $\R^N$.

Let $\Gamma_0$ be the Fundamental solution of
$$(-\Delta)^\alpha \Gamma_0=\delta_0\quad{\rm in}\quad  \R^N. $$
In fact, $\Gamma_0(x)=c_{N,\alpha} |x|^{2\alpha-N}$ for $x\in \R^N\setminus\{0\}$.
Since $u\ge 0$ in $\Omega\setminus\{0\},$
$\lim_{x\to0} (u+\Gamma_0)(x)=+\infty$
and $v_n$ is bounded in $\Omega$,
then there exists $r>0$ such that
$u+\Gamma_0\ge v_n$ in $B_r(0)\setminus\{0\}$ and
 it implies by Comparison Principle \cite[Theorem 2.3]{CFQ} that for any $n\in\N$
\begin{equation}\label{2.1}
u+\Gamma_0\ge v_n\quad{\rm in}\ \ \R^N\setminus\{0\}.
\end{equation}

Since $\lim_{y\to x}G(x,y)=+\infty$ for $x\in\Omega$, there exists $r_0>0$ such that $G(x,y)\ge 1$ for $x,y\in B_{r_0}(0)$, and by (\ref{2.0}),
\begin{eqnarray*}
v_n(x) &=& \mathbb{G}_\alpha[\chi_{\Omega\setminus B_{R_n}(0)} u^p]=\int_{\Omega\setminus B_{R_n}(0)} G(x,y) u^p(y)dy \\
  &\ge&  \int_{ B_{r_0}(0)\setminus B_{R_n}(0)} u^p (y)dy
  \\&=&n\to+\infty \quad\ {\rm as}\ \ \ n\to\infty,
\end{eqnarray*}
which together with (\ref{2.1}) implies that
$u+\Gamma_0=+\infty$ in $B_{r_0}(0)$ and this is impossible.
Therefore, we have that $u^p\in L^1(\Omega)$.
\end{proof}

\medskip

To improve the regularity,  we need following regularity result.
\begin{proposition}\label{embedding}\cite[Proposition 1.4]{RS}

Let $h\in L^s(\Omega)$ with $s\ge1$, then,
there exists $c_{3}>0$ such that

\noindent$(i)$
\begin{equation}\label{a 4.1}
\|\mathbb{G}_\alpha[h]\|_{L^\infty(\Omega)}\le c_{3}\|h\|_{L^s(\Omega)}\quad{\rm if}\quad \frac1s<\frac{2\alpha }N;
\end{equation}

\noindent$(ii)$
\begin{equation}\label{a 4.2}
\|\mathbb{G}_\alpha[h]\|_{L^r(\Omega)}\le c_{3}\|h\|_{L^s(\Omega)}\quad{\rm if}\quad \frac1s\le \frac1r+\frac{2\alpha}N\quad
\rm{and}\quad s>1;
\end{equation}

\noindent$(iii)$
\begin{equation}\label{a 4.02}
\|\mathbb{G}_\alpha[h]\|_{L^r(\Omega)}\le c_{3}\|h\|_{L^1(\Omega)}\quad{\rm if}\quad 1<\frac1r+\frac{2\alpha}N.
\end{equation}
\end{proposition}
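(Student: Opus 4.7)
The plan is to reduce the statement for the Green operator $\mathbb{G}_\alpha$ to an analogous statement for the Riesz potential $I_{2\alpha}[h](x):=\int_{\R^N}|x-y|^{2\alpha-N}h(y)\,dy$, and then invoke classical convolution inequalities. The starting point is the sharp two-sided estimate for the Green function of $(-\Delta)^\alpha$ on a bounded $C^2$ domain (see e.g. \cite{CS} cited earlier), which in particular gives
$$0\le G_\alpha(x,y)\le c\,|x-y|^{2\alpha-N},\qquad x,y\in\Omega,\ x\ne y.$$
Hence $|\mathbb{G}_\alpha[h](x)|\le c\,I_{2\alpha}[\,\chi_\Omega|h|\,](x)$ pointwise, and the three inequalities reduce to mapping properties of $I_{2\alpha}$ restricted to a bounded set.

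For part (i), the condition $1/s<2\alpha/N$ is equivalent to $(N-2\alpha)s'<N$, so the function $y\mapsto|x-y|^{2\alpha-N}$ lies in $L^{s'}(\Omega)$ uniformly in $x\in\Omega$. A direct application of H\"older's inequality then yields
$$|\mathbb{G}_\alpha[h](x)|\le c\Bigl(\int_\Omega |x-y|^{(2\alpha-N)s'}\,dy\Bigr)^{1/s'}\|h\|_{L^s(\Omega)}\le c_3\|h\|_{L^s(\Omega)},$$
where the inner integral is bounded by a constant depending only on $N,\alpha,s$ and $\mathrm{diam}(\Omega)$.

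For part (ii), after the pointwise majorization by $I_{2\alpha}$, this is precisely the Hardy--Littlewood--Sobolev inequality: convolution with $|x|^{2\alpha-N}$ maps $L^s(\R^N)$ continuously into $L^r(\R^N)$ provided $1<s,r<\infty$ and $1/s=1/r+2\alpha/N$. The case $1/s<1/r+2\alpha/N$ follows from this equality case combined with H\"older's inequality on the bounded domain $\Omega$ (trading integrability for a $|\Omega|$ factor). For part (iii), one notes that on a bounded set the kernel $|x|^{2\alpha-N}$ belongs to $L^r(\Omega-\Omega)$ whenever $r(N-2\alpha)<N$, i.e.\ $1/r>1-2\alpha/N$; writing $\mathbb{G}_\alpha[h]$ as a convolution of $h\in L^1$ with this truncated kernel, Young's convolution inequality $\|f*g\|_{L^r}\le\|f\|_{L^1}\|g\|_{L^r}$ gives the bound by $c_3\|h\|_{L^1(\Omega)}$ in exactly the stated range.

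The only technical point worth flagging is the endpoint in (ii) when $1/s=1/r+2\alpha/N$ with $s>1$: here one genuinely needs Hardy--Littlewood--Sobolev rather than H\"older, so this is the only step that is not a soft interpolation argument. Everything else is a routine computation once the Green function bound $G_\alpha(x,y)\le c|x-y|^{2\alpha-N}$ is in hand; accordingly, the authors simply cite \cite[Proposition~1.4]{RS} where this is carried out in detail.
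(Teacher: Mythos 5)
Your reconstruction is correct. The paper does not give a proof of this proposition but simply cites \cite[Proposition~1.4]{RS}, and the route you describe -- majorize $G_\alpha(x,y)$ by $c|x-y|^{2\alpha-N}$ using the Green function estimates of \cite{CS}, then apply H\"older for (i), Hardy--Littlewood--Sobolev plus H\"older on the bounded domain for (ii), and Young's inequality with the truncated Riesz kernel in $L^r(\Omega-\Omega)$ for (iii) -- is exactly the standard argument carried out in that reference, so there is no genuine difference in approach to report.
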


In the searching the second solution of (\ref{eq 1.2}),  Mountain Pass theorem is applied  in the Hilbert space    $ H^{\alpha}_0(\Omega)$,  defined by the closure of $C^\infty_c(\Omega)$ under the norm of
$$\norm{v}_\alpha=\left(\int_{\R^N}\int_{\R^N}\frac{|v(x)-v(y)|^2}{|x-y|^{N+2\alpha}} \,dxdy \right)^{\frac12}.$$
The corresponding inner product in $ H^{\alpha}_0(\Omega)$ is given as
$$\langle u,v\rangle_\alpha=\int_{\R^N}\int_{\R^N}\frac{[u(x)-u(y)][v(x)-v(y)]}{|x-y|^{N+2\alpha}} \,dxdy.$$
\begin{proposition}\label{pr 2.2}
For $s\in[0,2\alpha)$, the embedding:
$$H^{\alpha}_0(\Omega) \hookrightarrow L^q(\Omega, |x|^{-s}dx)$$
is continuous and compact for
$$q\in [1,\, \frac{2N-s}{N-2\alpha}).$$
\end{proposition}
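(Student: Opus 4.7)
The plan is to derive both continuity and compactness as direct consequences of H\"older's inequality together with the standard unweighted fractional Sobolev and Rellich--Kondrachov embeddings for $H^{\alpha}_0(\Omega)$, namely $H^{\alpha}_0(\Omega)\hookrightarrow L^{r}(\Omega)$ continuously for $r\in[1,2^{*}_{\alpha}]$ and compactly for $r\in[1,2^{*}_{\alpha})$, where $2^{*}_{\alpha}=2N/(N-2\alpha)$.

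For the continuity statement, I would apply H\"older's inequality with a conjugate pair $(a,a')$, $1/a+1/a'=1$, to estimate
\[
\int_{\Omega}|v|^{q}|x|^{-s}\,dx\le \Bigl(\int_{\Omega}|v|^{qa}\,dx\Bigr)^{1/a}\Bigl(\int_{\Omega}|x|^{-sa'}\,dx\Bigr)^{1/a'}.
\]
Since $s<2\alpha<N$, the second factor is finite whenever $a'<N/s$, equivalently $a>N/(N-s)$. The first factor is controlled by $\|v\|_{\alpha}^{q}$ via the fractional Sobolev embedding as soon as $qa\le 2^{*}_{\alpha}$. Combining the two constraints on $a$ is possible precisely in the subcritical regime for $q$ asserted in the proposition, which yields $\|v\|_{L^{q}(\Omega,|x|^{-s}dx)}\le C\|v\|_{\alpha}$.

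For the compactness statement, take a bounded sequence $\{v_n\}\subset H^{\alpha}_0(\Omega)$ with $v_n\rightharpoonup v$ and apply the same H\"older decomposition to $v_n-v$, but now with $a$ chosen so that $qa$ lies \emph{strictly} below $2^{*}_{\alpha}$; the strict subcriticality of $q$ leaves exactly the room needed for such a choice. The Rellich--Kondrachov theorem then gives $\|v_n-v\|_{L^{qa}(\Omega)}\to 0$, while the weight factor $\||x|^{-s}\|_{L^{a'}(\Omega)}$ is a fixed finite constant, so $v_n\to v$ strongly in $L^{q}(\Omega,|x|^{-s}dx)$.

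The main technical point, more bookkeeping than genuine obstacle, is verifying that the stated range of $q$ corresponds exactly to the range of exponents for which a conjugate pair $(a,a')$ with $a>N/(N-s)$ and $qa<2^{*}_{\alpha}$ exists. This reduces to a single elementary inequality involving $N$, $\alpha$, $s$ and $q$, after which no further tools (fractional Hardy inequality, Caffarelli--Silvestre extension, weighted Sobolev trace theorems, etc.) are required.
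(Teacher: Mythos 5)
Your approach is the same as the paper's: split off the singular weight by H\"older's inequality and then invoke the unweighted fractional Sobolev/Rellich--Kondrachov embeddings. The trouble is in the ``single elementary inequality'' you defer to the reader, which is precisely where the proof lives or dies. Carrying it out, the two constraints you identify are $a>\frac{N}{N-s}$ (so that $\int_\Omega |x|^{-sa'}\,dx<\infty$, since $a'=\frac{a}{a-1}$) and $qa\le \frac{2N}{N-2\alpha}$; a conjugate exponent $a$ satisfying both exists if and only if $\frac{N}{N-s}<\frac{2N}{q(N-2\alpha)}$, that is, if and only if $q<\frac{2(N-s)}{N-2\alpha}$. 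This is \emph{strictly smaller} than the range $q<\frac{2N-s}{N-2\alpha}$ claimed in the proposition whenever $s>0$, so your assertion that the constraints combine ``precisely in the subcritical regime for $q$ asserted in the proposition'' is false, and the method cannot reach the stated threshold.

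In fact the exponent $\frac{2(N-s)}{N-2\alpha}$ is the sharp fractional Hardy--Sobolev exponent (the unique $q$ making $\int_\Omega |x|^{-s}|u_\lambda|^q\,dx$ scale-invariant under $u_\lambda(x)=\lambda^{(N-2\alpha)/2}u(\lambda x)$), so the proposition's statement appears to contain a typo: $\frac{2N-s}{N-2\alpha}$ should read $\frac{2(N-s)}{N-2\alpha}$. The paper's own proof is no help in resolving this, because it contains an arithmetic slip: with $t=\frac1q\frac{2N}{N-2\alpha}$ one has $\frac{st}{t-1}=\frac{2Ns}{2N-q(N-2\alpha)}$, not $\frac{s}{2N-q(N-2\alpha)}$, and imposing $\frac{2Ns}{2N-q(N-2\alpha)}<N$ again gives $q<\frac{2(N-s)}{N-2\alpha}$. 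Fortunately the only downstream use in the paper is with $q=2$ and $s=(N-2\alpha)(p-1)<2\alpha$, for which $2<\frac{2(N-s)}{N-2\alpha}$ holds, so the corrected range suffices. But since the whole content of the lemma is the exponent range, you cannot treat the verification as bookkeeping to be omitted; had you written it out, you would have found the discrepancy.
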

\begin{proof}
From Theorem 6.10 and Theorem 7.1 in \cite{NPV},  it is known that the embedding
$$H^{\alpha}_0(\Omega) \hookrightarrow L^q(\Omega)$$
is continuous for
$$q\in [1,\, \frac{2N}{N-2\alpha}]$$
and is compact for
$$q\in [1,\, \frac{2N}{N-2\alpha}).$$

By using H\"{o}lder inequality, for $q\in[1,\, \frac{2N-s}{N-2\alpha})$, let $t=\frac1q\frac{2N}{N-2\alpha}$, then
$$qt=\frac{2N}{N-2\alpha},\quad \quad \frac{st}{t-1}=\frac{s}{2N-q(N-2\alpha)}<N$$
and
\begin{equation}\label{2.02}
\int_\Omega \frac{u^q}{|x|^s}\, dx\le \left(\int_\Omega u^{qt}\, dx\right)^{\frac1t} \left(\int_\Omega |x|^{-\frac{st}{t-1}}\, dx\right) ^{1-\frac{1}{t}},
\end{equation}
thus, the embedding
$H^{\alpha}_0(\Omega) \hookrightarrow L^q(\Omega, |x|^{-s}dx)$
is continuous.
Now we choose $t_\epsilon=\frac1q\frac{2N}{N-2\alpha}-\epsilon$ for $\epsilon>0$ sufficient small,
then (\ref{2.02}) holds with $qt_\epsilon<\frac{2N}{N-2\alpha}$ and $\frac{st_\epsilon}{t_\epsilon-1}<N$, so
 the embedding
$H^{\alpha}_0(\Omega) \hookrightarrow L^q(\Omega, |x|^{-s}dx)$
is compact for
$q\in [1,\, \frac{2N-s}{N-2\alpha}).$
\end{proof}

\begin{lemma}\label{lm 4.1}
Let $\tau\in(0,N)$, then for $x\in B_{\frac12}(0)\setminus\{0\}$,
\begin{equation}\label{4.2.6}
\mathbb{G}_\alpha[|\cdot|^{-\tau}](x)\le
\left\{ \arraycolsep=1pt
\begin{array}{lll}
 c_2|x|^{-\tau+2\alpha} \quad
 &{\rm if}\quad \tau>2\alpha,\\[2mm]
 \displaystyle   -c_2\log (|x|) \quad
 &{\rm if}\quad \tau=2\alpha,\\[2mm]
 c_2  \quad
 &{\rm if}\quad \tau<2\alpha.
\end{array}
\right.
\end{equation}

For $p\in(1,\frac{N}{N-2\alpha})$,   there holds
\begin{equation}\label{4.2.1}
\mathbb{G}_\alpha[\mathbb{G}_\alpha^p[\delta_{0}]]\le
\left\{ \arraycolsep=1pt
\begin{array}{lll}
 c_2|x|^{p(2\alpha-N)+2\alpha} \quad
 &{\rm if}\quad p\in (\frac{2\alpha}{N-2\alpha},\frac{N}{N-2\alpha}),\\[2mm]
 \displaystyle   -c_2\log (|x|) \quad
 &{\rm if}\quad p=\frac{2\alpha}{N-2\alpha},\\[2mm]
 c_2  \quad
 &{\rm if}\quad p<\frac{2\alpha}{N-2\alpha}
\end{array}
\right.
\end{equation}
and
\begin{equation}\label{4.2.7}
 \mathbb{G}_\alpha[\mathbb{G}_\alpha^p[\delta_{0}]]\le  c_2\mathbb{G}_\alpha[\delta_{0}]\quad {\rm in}\quad \Omega\setminus\{0\}.
\end{equation}

\end{lemma}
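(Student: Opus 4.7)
The plan is to prove the three estimates in order, as each feeds into the next, and to work with the pointwise bound $G_\alpha(x,y)\le c\,|x-y|^{2\alpha-N}$ on $\Omega\times\Omega$ (from the Green's function estimate cited in the Introduction).

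For \eqref{4.2.6}, I would fix $x$ with $|x|\le 1/2$ and split
\[
\mathbb{G}_\alpha[|\cdot|^{-\tau}](x)\le c\int_\Omega |x-y|^{2\alpha-N}|y|^{-\tau}\,dy
\]
into the three regions $A_1=\{|y|\le |x|/2\}$, $A_2=\{|x|/2<|y|<2|x|\}$, $A_3=\{|y|\ge 2|x|,\,y\in\Omega\}$. On $A_1$ one has $|x-y|\asymp |x|$, so the integral is bounded by $c\,|x|^{2\alpha-N}\int_{|y|\le|x|/2}|y|^{-\tau}\,dy\asymp |x|^{2\alpha-\tau}$ using $\tau<N$. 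On $A_2$ one has $|y|\asymp |x|$, and the remaining integral $\int_{|x-y|\le 3|x|}|x-y|^{2\alpha-N}\,dy\asymp |x|^{2\alpha}$ again gives $|x|^{2\alpha-\tau}$. On $A_3$ one has $|x-y|\asymp |y|$, so the integral reduces to $\int_{2|x|\le|y|\le R}|y|^{2\alpha-N-\tau}\,dy$ (with $R=\operatorname{diam}\Omega$); this splits into the three subcases $\tau>2\alpha$ (giving $|x|^{2\alpha-\tau}$), $\tau=2\alpha$ (giving $-\log|x|$), and $\tau<2\alpha$ (giving a constant), which is exactly \eqref{4.2.6}.

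For \eqref{4.2.1}, I would use $\mathbb{G}_\alpha[\delta_0](y)=G_\alpha(y,0)\le c\,|y|^{2\alpha-N}$, whence $\mathbb{G}_\alpha^p[\delta_0](y)\le c\,|y|^{-\tau}$ with $\tau:=p(N-2\alpha)$. The hypothesis $p<N/(N-2\alpha)$ gives $\tau<N$, so \eqref{4.2.6} applies, and the three cases $\tau\gtreqless 2\alpha$ translate directly into $p\gtreqless\frac{2\alpha}{N-2\alpha}$, producing the three lines of \eqref{4.2.1} once one substitutes $\tau=p(N-2\alpha)$ and simplifies $2\alpha-\tau=p(2\alpha-N)+2\alpha$.

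For \eqref{4.2.7}, I would work locally near the singular point and then handle the bulk by compactness. Near the origin, since $\mathbb{G}_\alpha[\delta_0](x)\asymp |x|^{2\alpha-N}$, it suffices to check that the three bounds in \eqref{4.2.1} are each dominated by $c\,|x|^{2\alpha-N}$ as $|x|\to 0$; the only nontrivial check is in the first case, where one needs $p(2\alpha-N)+2\alpha\ge 2\alpha-N$, and this is equivalent to $p(N-2\alpha)\le N$, which holds by hypothesis. On the complement of a small ball $B_{r_0}(0)$, the function $\mathbb{G}_\alpha^p[\delta_0]$ is in $L^1(\Omega)$ (since $p(N-2\alpha)<N$ makes $|y|^{-p(N-2\alpha)}$ integrable near $0$), so by Proposition \ref{embedding}(iii) or the explicit Green bound, $\mathbb{G}_\alpha[\mathbb{G}_\alpha^p[\delta_0]]$ is controlled on $\Omega\setminus B_{r_0/2}(0)$ by a constant multiple of the harmonic-measure-type quantity $\mathbb{G}_\alpha[\delta_0]$, using that both functions have the same boundary decay rate $\asymp \mathrm{dist}(x,\partial\Omega)^\alpha$ inherited from the Green kernel.

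The main obstacle I expect is the bookkeeping in the three-region splitting for \eqref{4.2.6} — in particular, making the constants uniform across the regimes $\tau\gtreqless 2\alpha$ and checking the logarithmic borderline case cleanly — together with justifying the comparison on $\Omega\setminus B_{r_0}(0)$ in \eqref{4.2.7}, which requires a precise boundary-behavior estimate for $\mathbb{G}_\alpha[\delta_0]$ rather than just the interior pointwise bound on $G_\alpha$.
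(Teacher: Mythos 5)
Your proof is correct and follows the same overall architecture as the paper (prove the kernel estimate \eqref{4.2.6}, read off \eqref{4.2.1} by putting $\tau=p(N-2\alpha)$, then deduce \eqref{4.2.7} by comparison), but you prove the key estimate \eqref{4.2.6} by a genuinely different device. The paper rescales $y\mapsto |x|y$, writing
$\mathbb{G}_\alpha[|\cdot|^{-\tau}](x)\le c\,|x|^{2\alpha-\tau}\int_{B_{R_0/|x|}(0)}|e_x-y|^{2\alpha-N}|y|^{-\tau}\,dy$
and then dominating the integrand by a kernel comparable to $(1+|y|^{N-2\alpha+\tau})^{-1}$, so the three regimes come from the growth of the resulting one-dimensional integral in the large parameter $R_0/|x|$. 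Your annular splitting $A_1\cup A_2\cup A_3$ treats the two local singularities (at $y=0$ and $y=x$) and the far field separately and reaches the same answer. Both are standard Riesz-potential tactics; the rescaling is more compact, while your split is more transparent about where each contribution comes from (and, incidentally, avoids the mild imprecision in the paper's pointwise domination near $y=e_x$ and $y=0$, where the inequality as written fails but only affects an $O(1)$ piece of the integral). Your treatment of \eqref{4.2.7} also makes explicit a point the paper passes over in one line: since $\mathbb{G}_\alpha[\delta_0]$ vanishes like $\rho^\alpha$ at $\partial\Omega$, the interior comparison coming from \eqref{4.2.1} is not by itself enough on all of $\Omega\setminus\{0\}$, and one must invoke the two-sided boundary behavior of $G_\alpha$ (Chen--Song) to match decay rates near $\partial\Omega$.
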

\begin{proof}  For $x\in B_{\frac12}(0)\setminus\{0\}$, we have that
\begin{eqnarray*}
\mathbb{G}_\alpha[|\cdot|^{-\tau}](x)  &\le& c_{N,\alpha}  \int_{B_{R_0}(0)}\frac1{|x-y|^{N-2\alpha}}\frac1{|y|^{\tau}}dy \nonumber \\
    &=&   c_{N,\alpha}  |x|^{2\alpha-\tau } \int_{B_{\frac{R_0}{|x|}}(0)}\frac1{|e_{x}-y|^{N-2\alpha}}\frac1{|y|^{\tau}}dy
    \\&\le &c_4 |x|^{2\alpha-\tau }\int_{B_{\frac{R_0}{|x|}}(0)} \frac{1}{1+|y|^{ N-2\alpha+\tau}} dy
   \\&\le & \left\{ \arraycolsep=1pt
\begin{array}{lll}
 c_2|x|^{-\tau+2\alpha} \quad
 &{\rm if}\quad \tau>2\alpha,\\[2mm]
 \displaystyle   -c_2\log (|x|) \quad
 &{\rm if}\quad \tau=2\alpha,\\[2mm]
 c_2  \quad
 &{\rm if}\quad \tau<2\alpha,
\end{array}
\right.
\end{eqnarray*}
where $e_{x}=\frac{x}{|x|}$  and $R_0>0$ such that $\Omega\subset B_{R_0}(0)$.

From \cite{CS}, we know that
$$\mathbb{G}_\alpha[\delta_{0}](x)\le \frac{c_{N,\alpha}}{|x|^{N-2\alpha}}$$
 and
\begin{eqnarray*}
 \mathbb{G}_\alpha^p[\delta_{0}](x) \le   \frac{c_{N,\alpha}^p}{|x|^{(N-2\alpha)p}},\quad\forall x\in\Omega\setminus\{0\},
\end{eqnarray*}
then  we apply  (\ref{4.2.6}) to obtain (\ref{4.2.1}) and (\ref{4.2.7}).
\end{proof}

Let $\eta_0:\, \R^N\to [0,1]$ be a $C^\infty$ radially symmetric function increasing with respect to $|x|$ such that
$\eta_0=1$ in $\R^N\setminus B_2(0)$ and $\eta_0=0$ in $B_1(0)$. Let $\eta_\epsilon(x)=\eta_0(\epsilon^{-1} x)$ for  $x\in\R^N$
and
\begin{equation}\label{2.1-1}
 u_\epsilon=u\eta_\epsilon.
\end{equation}
By direct computation, we have that
\begin{eqnarray*}
 (-\Delta)^\alpha u_\epsilon (x) &=& \eta_\epsilon(x)(-\Delta)^\alpha u(x)+u(x)(-\Delta)^\alpha \eta_\epsilon(x)\\
    &&  -\frac{c_{N,\alpha}}{2}\int_{\R^N} \frac{(u(x)-u(y))(\eta_\epsilon(x)- \eta_\epsilon (y))}{|x-y|^{N+2\alpha}}dy,\quad\forall x\in \Omega\setminus\{0\}
\end{eqnarray*}
and
\begin{eqnarray*}
 (-\Delta)^\alpha u_\epsilon (0)=\lim_{x\to0} (-\Delta)^\alpha u_\epsilon (x)=-c_{N,\alpha}\int_{\R^N\setminus B_\epsilon(0)}  \frac{u(y)\eta_\epsilon(y)}{|y|^{N+2\alpha}}  dy.
\end{eqnarray*}

Denote  by $L$  the operator related to $(-\Delta)^\alpha u -u^p$ in the distribution sense,  i.e.
\begin{equation}\label{3.1}
L(\xi)=\int_\Omega [u(-\Delta)^\alpha \xi -u^p\xi]\,dx,\quad \forall\xi\in C^\infty_c(\Omega).
\end{equation}

\begin{lemma}\label{lm 2.0}
 For any $\xi\in C^\infty_c(\Omega)$ with
the support in $\Omega\setminus\{0\}$,
$$L(\xi)=0.$$

\end{lemma}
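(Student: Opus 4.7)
The plan is to exploit the fact that $(-\Delta)^\alpha u = u^p$ holds pointwise on $\Omega\setminus\{0\}$, combined with the cutoff regularization $u_\epsilon=u\eta_\epsilon$ from \eqref{2.1-1}, and then pass to the limit. Since $\operatorname{supp}(\xi)$ is a compact subset of $\Omega\setminus\{0\}$, there exists $r_0>0$ with $\operatorname{supp}(\xi)\subset\Omega\setminus\overline{B_{r_0}(0)}$. For every $\epsilon\in(0,r_0/4)$, the function $\eta_\epsilon$ is identically $1$ on a neighborhood of $\operatorname{supp}(\xi)$, so the test function never ``sees'' the singularity.

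My first step would be to establish the self-adjointness identity
$$\int_\Omega u_\epsilon\,(-\Delta)^\alpha\xi\,dx=\int_\Omega \xi\,(-\Delta)^\alpha u_\epsilon\,dx.$$
Since $u_\epsilon\in L^\infty(\R^N)$ has compact support in $\overline{\Omega}$ and $\xi\in C^\infty_c(\Omega)$, this follows from the standard truncation argument: restrict the principal value in $(-\Delta)^\alpha\xi$ to $\{|x-y|>\delta\}$, apply Fubini to the resulting absolutely integrable kernel $|x-y|^{-N-2\alpha}$, use the symmetry of this kernel under the swap $x\leftrightarrow y$, and send $\delta\to 0$ using the pointwise existence of $(-\Delta)^\alpha u_\epsilon$ on $\operatorname{supp}(\xi)$ (where $u_\epsilon=u$ is classical).

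Next, I would compute $(-\Delta)^\alpha u_\epsilon$ on $\operatorname{supp}(\xi)$. Writing $u=u_\epsilon+u(1-\eta_\epsilon)$ and using that $u$ solves the equation classically at such points,
$$(-\Delta)^\alpha u_\epsilon(x)=u^p(x)-c_{N,\alpha}\int_{B_{2\epsilon}(0)}\frac{u(y)\bigl(1-\eta_\epsilon(y)\bigr)}{|x-y|^{N+2\alpha}}\,dy.$$
For $x\in\operatorname{supp}(\xi)$ and $y\in B_{2\epsilon}(0)$ one has $|x-y|\ge r_0/2$, hence the error term is bounded in $L^\infty(\operatorname{supp}(\xi))$ by $C\int_{B_{2\epsilon}(0)}u\,dy$, which tends to $0$ as $\epsilon\to 0$ by Proposition \ref{pr 1.1} (giving $u\in L^1(\Omega)$ on the bounded domain, since $p>1$) and absolute continuity of the Lebesgue integral.

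Finally, I would take $\epsilon\to 0$ in the identity: the left-hand side tends to $\int_\Omega u(-\Delta)^\alpha\xi\,dx$ by dominated convergence (using $u\eta_\epsilon\to u$ a.e., the domination $0\le u\eta_\epsilon\le u\in L^1(\Omega)$, and $(-\Delta)^\alpha\xi\in L^\infty$), while the right-hand side tends to $\int_\Omega\xi u^p\,dx$ thanks to the uniform vanishing of the error together with $\xi u^p\in L^1(\Omega)$. Subtracting yields $L(\xi)=0$. The main obstacle I anticipate is the clean justification of the self-adjointness step, because $u_\epsilon$ is merely bounded and is not globally smooth across $\partial B_\epsilon(0)\cup\partial B_{2\epsilon}(0)$; the truncated-PV/Fubini route is the robust way to bypass this without requiring an $H^\alpha$-pairing.
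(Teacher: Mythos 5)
Your proposal is correct and follows essentially the same strategy as the paper: integrate by parts against the cutoff $u_\epsilon = u\eta_\epsilon$, compute $(-\Delta)^\alpha u_\epsilon = (-\Delta)^\alpha u + c_{N,\alpha}\int_{B_{2\epsilon}(0)}\frac{u(y)(1-\eta_\epsilon(y))}{|x-y|^{N+2\alpha}}\,dy$ on the support of $\xi$, show the error vanishes uniformly using $u\in L^1(\Omega)$, and pass to the limit by dominated convergence (the paper cites Lemma~2.2 of \cite{CV1} for the integration-by-parts step and routes through the product-rule decomposition of $(-\Delta)^\alpha(u\eta_\epsilon)$ before arriving at the same error formula, but the content is the same). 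Note a small sign slip: since $u_\epsilon = u - u(1-\eta_\epsilon)$ and $u(1-\eta_\epsilon)$ is supported away from $\operatorname{supp}(\xi)$, the correction term enters with a $+$, not a $-$; this is harmless because you only use its absolute value.
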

\begin{proof}
For any $\xi\in C^\infty_c(\Omega)$, applying the Integral by Parts formula, see Lemma 2.2 in \cite{CV1}, it infers that
 \begin{equation}\label{2.4}
  \int_\Omega \xi (-\Delta)^\alpha u_\epsilon \,dx=  \int_\Omega u_\epsilon  (-\Delta)^\alpha\xi\,dx.
 \end{equation}

Since $\xi\in C^\infty_c(\Omega)$ has the support in $\Omega\setminus\{0\}$, then there exists $r>0$ such that
$\xi=0$ in $B_r(0)$ and if we put the $\epsilon>0$ small enough, we have that
\begin{eqnarray*}
&&|\int_\Omega\left[ u (-\Delta)^\alpha \xi  -u^p\xi\right]\,dx|
\\&&  \le    |\int_\Omega\left[ u_\epsilon (-\Delta)^\alpha \xi  -u^p\xi\right]\,dx|+|\int_\Omega| (1-\eta_\epsilon) u (-\Delta)^\alpha \xi|\,dx
\\&&  \le  \int_{ B_{2\epsilon}(0)}|u (-\Delta)^\alpha \xi|\,dx+\int_\Omega  \left| \xi(-\Delta)^\alpha u_\epsilon  -u^p\xi \right|\,dx
\\&& \quad +\int_\Omega u |\xi||(-\Delta)^\alpha \eta_\epsilon |dx +\frac{c_{N,\alpha}}{2}\int_\Omega|\int_{\R^N} \frac{(u(x)-u(y))(\eta_\epsilon(x)- \eta_\epsilon (y))}{|x-y|^{N+2\alpha}}dy| |\xi(x)|\,dx
\\&&  =\int_{ B_{2\epsilon}(0)}|u (-\Delta)^\alpha \xi|\,dx+  \int_{\Omega\setminus B_r(0)}  \left| \xi(-\Delta)^\alpha u_\epsilon  -u^p\xi \right|\,dx +\int_{\Omega\setminus B_r(0)} u|\xi| |(-\Delta)^\alpha \eta_\epsilon(x)|dx \\&&\qquad+\frac{c_{N,\alpha}}{2}\int_{\Omega\setminus B_r(0)}|\int_{\R^N} \frac{(u(x)-u(y))(\eta_\epsilon(x)- \eta_\epsilon (y))}{|x-y|^{N+2\alpha}}dy||\xi(x)|\,dx,
\end{eqnarray*}
where $u_\epsilon$ is defined in (\ref{2.1-1}).
For $x\in \Omega\setminus B_r(0)$ and $\epsilon <\frac r4$, we have that
\begin{eqnarray*}
(-\Delta)^\alpha u_\epsilon (x) &=&c_{N,\alpha}{\rm P.V.}\int_{\R^N}\frac{u(x)-u_\epsilon(y)}{|x-y|^{N+2\alpha}}\, dy \\
    &=& (-\Delta)^\alpha u (x) +c_{N,\alpha}\int_{B_{2\epsilon}(0)}\frac{(1-\eta_\epsilon(y)) u(y)}{|x-y|^{N+2\alpha}}\, dy,
\end{eqnarray*}
where $|x-y|>r-2\epsilon$ and then we have that
$$
\lim_{\epsilon\to0} \norm{(-\Delta)^\alpha u_\epsilon -(-\Delta)^\alpha u}_{L^\infty(\Omega\setminus B_r(0))}=0,
$$
which implies that
\begin{equation}\label{2.2}
\lim_{\epsilon\to0} \int_{\Omega\setminus B_r(0)}\left|  \xi   (-\Delta)^\alpha u_\epsilon -u^p\xi \right|\,dx=0.
\end{equation}

For $x\in \Omega\setminus B_r(0)$ and $\epsilon <\frac r4$, we have that
\begin{eqnarray*}
|(-\Delta)^\alpha \eta_\epsilon (x)|  = c_{N,\alpha}\int_{B_{2\epsilon}(0)}\frac{1-\eta_\epsilon(y)}{|x-y|^{N+2\alpha}}\, dy \le    c \epsilon^N (r-2\epsilon)^{-N-2\alpha},
\end{eqnarray*}
then we imply that
\begin{equation}\label{2.3}
\lim_{\epsilon\to0}\int_{\Omega\setminus B_r(0)} u|\xi| |(-\Delta)^\alpha \eta_\epsilon(x)|dx =0.
\end{equation}

Finally, for $x\in \Omega\setminus B_r(0)$ and $\epsilon <\frac r4$, there holds that
\begin{eqnarray*}
 &&\int_{\Omega\setminus B_r(0)}|\int_{\R^N} \frac{(u(x)-u(y))(\eta_\epsilon(x)- \eta_\epsilon (y))}{|x-y|^{N+2\alpha}}dy| |\xi(x)|dx
 \\&&= \int_{\Omega\setminus B_r(0)}|\int_{B_{2\epsilon}(0)} \frac{(u(x)-u(y))(1- \eta_\epsilon (y))}{|x-y|^{N+2\alpha}}dy| |\xi(x)|dx \\
  &&\le   (r-2\epsilon)^{-N-2\alpha} \norm{\xi}_{L^\infty(\Omega)} \left[ 2^Nc_{N,\alpha} \epsilon^N \int_\Omega u(x)\,dx  +\int_{B_{2\epsilon}(0)} u(y) dy  \right]
  \\ &&\le   (r-2\epsilon)^{-N-2\alpha} \norm{\xi}_{L^\infty(\Omega)} \left[ 2^Nc_{N,\alpha} \epsilon^N \int_\Omega u(x)\,dx  +c_5\epsilon^{2\alpha}  \right]
  \\&&\to0\quad {\rm as}\quad \epsilon\to0,
\end{eqnarray*}
which, together with (\ref{2.2}) and (\ref{2.3}), implies that
$$\int_\Omega \left[u (-\Delta)^\alpha\xi   -u^p\xi\right]\,dx=0.$$
Therefore,
$L(\xi)=0$ for any $\xi\in C^\infty_c(\Omega)$ with
the support in $\Omega\setminus\{0\}$.
\end{proof}

\medskip

\section{Isolated singularities}

From  (\ref{lp}), then $u\in L^1(\Omega)$ and for any $\xi\in C^\infty_c (\Omega)$,
$$|\int_\Omega  u (-\Delta)^\alpha\xi\,dx| <+\infty,$$
 so $L $ is a bounded functional of $C^\infty_c (\Omega)$.
From Lemma \ref{lm 2.0},  for any $\xi\in C^\infty_c (\Omega)$ with
the support in $\Omega\setminus\{0\}$, then
$$L(\xi)=0.$$
This means that the support of $L$ is a isolated set $\{0\}$ and by Theorem XXXV in \cite{S} (see also Theorem 6.25 in \cite{R}) it implies that
\begin{equation}\label{S}
 L=\sum_{|a|=0}^\infty k_a D^{a}\delta_0,
\end{equation}
where $a=(a_1,\cdots,a_N)$ is a multiple index with $a_i\in\N$, $|a|= \sum_{i=1}^Na_i$ and in particular, $D^0\delta_0=\delta_0$.
Then we have that
\begin{equation}\label{3.3}
L(\xi)= \int_\Omega \left[u(-\Delta)^\alpha\xi -u^p\xi\right]\,dx=\sum_{|a|=0}^{\infty} k_a D^{a}\xi(0)\quad {\rm for}\quad \forall \xi\in C^\infty_c (\Omega).
\end{equation}

\begin{proposition}\label{pr 2.1}
Assume that $p>1$ and $a$ is multiple index.
Then
\begin{equation}\label{3.0}
 k_a=0\quad{\rm for\ any}\quad |a|\ge 2\alpha.
\end{equation}
In particular, if  $\alpha\in (0,\frac12]$, then there exists $k\ge0$ such that
$$L=k\delta_0.$$

\end{proposition}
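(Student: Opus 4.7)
The plan is to exploit the finiteness of the Schwartz representation \eqref{S} by testing the distributional identity \eqref{3.3} against a family of scaled smooth functions that isolates each coefficient $k_a$ in turn. Since $L$ has support at $\{0\}$ and is of locally finite order on any compact set, by the Schwartz structure theorem the formal sum in \eqref{S} is in fact finite; setting $M := \max\{|a| : k_a \neq 0\}$, I would argue by contradiction that $M \ge 2\alpha$ must force $k_a = 0$ for every multi-index $a$ of length $M$, contradicting the definition of $M$.

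Fix such an $a$ and build the model test function $\phi_a(x) := (a!)^{-1} x^a \eta(x) \in C^\infty_c(B_1(0))$, where $\eta$ is a fixed smooth cutoff equal to $1$ on $B_{1/2}(0)$; a Leibniz computation at $x = 0$ yields $D^b \phi_a(0) = \delta_{a,b}$ for every multi-index $b$ with $|b| \le M$. Rescale by setting $\phi_{a,\epsilon}(x) := \epsilon^M \phi_a(x/\epsilon)$, which belongs to $C^\infty_c(\Omega)$ for small $\epsilon > 0$. Then $D^b \phi_{a,\epsilon}(0) = \epsilon^{M - |b|} D^b \phi_a(0)$ vanishes for $|b| < M$ and equals $\delta_{a,b}$ for $|b| = M$, while contributions with $|b| > M$ drop out by maximality of $M$. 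Consequently the right-hand side of \eqref{3.3} evaluated at $\phi_{a,\epsilon}$ is identically $k_a$, independent of $\epsilon$.

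The left-hand side I would control through the homogeneity $(-\Delta)^\alpha \phi_{a,\epsilon}(x) = \epsilon^{M-2\alpha}\psi(x/\epsilon)$, where $\psi := (-\Delta)^\alpha \phi_a$ satisfies the standard decay $|\psi(y)| \le C(1+|y|)^{-N-2\alpha}$ coming from the compact support of $\phi_a$. Combining this pointwise bound with the $L^p$ integrability of $u$ from Proposition \ref{pr 1.1}, splitting $\int_\Omega$ into $\{|x| \le \epsilon\}$ and $\{|x| > \epsilon\}$, and applying H\"older's inequality with conjugate exponent $p' = p/(p-1)$, delivers a bound of the shape
$$\left|\int_\Omega u\,(-\Delta)^\alpha \phi_{a,\epsilon}\,dx\right| \le C\,\|u\|_{L^p(\Omega)}\,\epsilon^{M - 2\alpha + N/p'},$$
while the absorption term satisfies $|\int_\Omega u^p \phi_{a,\epsilon}\,dx| \le C\epsilon^M \int_{B_\epsilon(0)} u^p\,dx \to 0$ by absolute continuity. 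Since $M \ge 2\alpha$ and $N/p' > 0$, both quantities vanish as $\epsilon \to 0^+$, forcing $k_a = 0$ and proving \eqref{3.0}.

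For the final assertion, when $\alpha \in (0, 1/2]$ the threshold $2\alpha \le 1$ makes \eqref{3.0} eliminate every $k_a$ with $|a| \ge 1$, so only $k := k_0$ survives and $L = k \delta_0$. The sign $k \ge 0$ follows from the positivity of $u$: for any nonnegative $\xi \in C^\infty_c(\Omega)$ with $\xi(0) = 1$, the splitting $\xi = \xi\eta_\epsilon + \xi(1-\eta_\epsilon)$ combined with Lemma \ref{lm 2.0} gives $L(\xi) = L(\xi(1-\eta_\epsilon))$ for every $\epsilon > 0$, and a limiting argument using the integration-by-parts identity from the proof of Lemma \ref{lm 2.0} together with $u \ge 0$ and $u^p \ge 0$ forces $L(\xi) \ge 0$, hence $k \ge 0$. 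The principal obstacle in this plan is the left-hand-side estimate in the third paragraph: it demands simultaneously the sharp $|y|^{-N-2\alpha}$ decay of the fractional Laplacian of a compactly supported bump and a tight H\"older balance against $u \in L^p$, with the resulting exponent $M - 2\alpha + N/p'$ being only marginally positive at the critical threshold $M = 2\alpha$.
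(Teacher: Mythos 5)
Your argument for \eqref{3.0} — scaled monomial test functions, the $L^p$ bound from Proposition~\ref{pr 1.1}, and sending $\epsilon\to 0$ — is essentially the paper's own proof; the cosmetic differences are the $\epsilon^M$ normalization, the use of the global decay $|(-\Delta)^\alpha\phi_a(y)|\le C(1+|y|)^{-N-2\alpha}$ plus a single H\"older estimate on the left-hand side instead of the paper's inner/outer split, and the (redundant) appeal to finiteness of the Schwartz expansion together with maximality of $M$. The latter is unnecessary: because $\phi_a$ is a pure monomial near the origin one has $D^b\phi_a(0)=\delta_{a,b}$ for \emph{every} multi-index $b$, so all terms $b\neq a$ drop out of $\sum_b k_b D^b\phi_{a,\epsilon}(0)$ for purely algebraic reasons, and the argument can be run separately for each fixed $a$ with $|a|\ge 2\alpha$, exactly as in the paper.

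Your justification of the sign $k\ge 0$ does not go through as stated. In the identity
$L(\xi)=L(\xi(1-\eta_\epsilon))=\int_\Omega u\,(-\Delta)^\alpha[\xi(1-\eta_\epsilon)]\,dx-\int_\Omega u^p\,\xi(1-\eta_\epsilon)\,dx$,
the function $(-\Delta)^\alpha[\xi(1-\eta_\epsilon)]$ changes sign — it is strictly negative on the complement of the small support of $\xi(1-\eta_\epsilon)$ — so $u\ge 0$ and $u^p\ge 0$ do not by themselves control the sign of the surviving integral in the limit $\epsilon\to 0$. (The paper likewise does not establish $k\ge 0$ inside the proof of Proposition~\ref{pr 2.1}; the nonnegativity of $k$ is effectively used only afterwards, via the representation $u=\mathbb{G}_\alpha[u^p]+k\,\mathbb{G}_\alpha[\delta_0]$ together with $u\ge 0$ near the origin.) If you want the sign at this stage, you need an actual estimate showing $\mathbb{G}_\alpha[u^p](x)=o(|x|^{2\alpha-N})$ as $x\to 0$, so that a negative $k$ would force $u<0$ near the origin; otherwise present the sign as a consequence of the later integral representation rather than of the hand-waved limit.
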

\begin{proof}
For any multiple index $a=(a_1,\cdots,a_N)$, let $\zeta_a$ be a    $C^\infty$ function such that
\begin{equation}\label{3.2}
{\rm supp}(\zeta_a)\subset \overline{B_1(0)}\quad{\rm and}\quad \zeta_a(x)=k_{a} \prod_{i=1}^N x_i^{a_i} \quad {\rm for}\  \ x\in B_1(0).
\end{equation}
Now we use the following test functions in  (\ref{3.3}),
$$\xi_\epsilon(x):=\zeta_a(\epsilon^{-1}x),\quad \forall x\in\R^N.$$
Observe that
$$\sum_{|a|\le q} k_{a} D^{a}\xi_\epsilon(0)=\frac{k_{a}^2}{\epsilon^{|a|}} \prod^{N}_{i=1}a_i! ,$$
where  $ a_i!=a_i\cdot (a_i-1)\cdots1>0$  and $ a_i!=1$ if $a_i=0$.

Let $r>0$ and then
\begin{eqnarray*}
|\int_\Omega u(-\Delta)^\alpha\xi_{\epsilon}\,dx|  &=& \frac1{\epsilon^{2\alpha}} |\int_\Omega u(x)(-\Delta)^\alpha\zeta_a(\frac1\epsilon x)\,dx|   \\
    &\le & \frac1{\epsilon^{2\alpha}}  \left[\int_{\Omega\setminus B_r(0)} u(x)|(-\Delta)^\alpha\zeta_a(\frac1\epsilon x)|\, dx
    +\int_{B_r(0)} u(x)|(-\Delta)^\alpha\zeta_a(\frac1\epsilon x)|\, dx\right].
\end{eqnarray*}
Fix $r$, we see that
$$|(-\Delta)^\alpha\zeta_a(\frac1\epsilon x)|\to 0\quad{\rm as}\quad \epsilon\to0\quad {\rm uniformly\ in}\quad  \Omega\setminus B_r(0),$$
then
\begin{eqnarray*}
\int_{\Omega\setminus B_r(0)} u(x)|(-\Delta)^\alpha\zeta_a(\frac1\epsilon x)| dx &\to& 0 \quad{\rm as}\quad \epsilon\to0.
\end{eqnarray*}
Furthermore,
\begin{eqnarray*}
\int_{B_r(0)} u(x)|(-\Delta)^\alpha\zeta_a(\frac1\epsilon x)|\, dx &\le&\norm{(-\Delta)^\alpha\zeta_a}_{L^\infty(\R^N)} \left(\int_{B_r(0)} u^p(x)\,dx\right)^{\frac1p} |B_r(0)|^{\frac{p-1}{p}}  \\
    &\le& c_6\norm{(-\Delta)^\alpha\zeta_a}_{L^\infty(\R^N)}  \norm{u}_{L^p(\Omega)}   r^{\frac{p-1}{p}N}
    \\&\to& 0\quad {\rm as}\quad r\to0.
\end{eqnarray*}
Therefore, we have that
\begin{equation}\label{3.4}
|\int_\Omega u(-\Delta)^\alpha\xi_{\epsilon}\,dx|=\epsilon^{-2\alpha} o(1).
\end{equation}

On the other side, we see that
\begin{eqnarray*}
 \int_\Omega u^p \xi_{\epsilon}\,dx   &=&      \int_{ B_\epsilon(0)} u^p(x)\zeta_a(\frac1\epsilon x)\, dx
 \\&\le & \norm{\zeta_a}_{L^\infty(\Omega)}  \int_{B_\epsilon(0)} u^p(x) \, dx\to 0\quad {\rm as}\quad \epsilon\to0.
\end{eqnarray*}
For $|a|\ge 2\alpha$, we have that
$$k_{a}^2\le c_7\epsilon^{|a|}[\epsilon^{-2\alpha} o(1)+o(1)]\to 0\quad{\rm as}\quad \epsilon\to0,$$
then we have $k_{a}=0$ by arbitrary of $\epsilon$ in (\ref{3.2}) . Thus (\ref{3.0}) holds true.

In the particular of $\alpha\in(0,\frac12]$, since   $|a|\in \N$ and $2\alpha\le 1$, then we have that
  $k_a=0$ for all $|a|\ge 1$. The proof ends.
\end{proof}

From Proposition \ref{pr 2.1},  it implies that for $\alpha\in(\frac12,1)$, the expression (\ref{S}) reduces to
\begin{equation}\label{3.5}
 L=k\delta_0+\sum_{i=1}^N k_iD_i\delta_0.
\end{equation}
where $\langle D_i\delta_0,\xi\rangle=\frac{\partial \xi(0)}{\partial x_i}$.
We observe that
$$G_\alpha(x,y)=\frac{c_{N,\alpha}}{|x-y|^{N-2\alpha}}-g(x,y),$$
where $g$ is $\alpha-$harmonic function such that $g(x,y)=\frac{c_{N,\alpha}}{|x-y|^{N-2\alpha}}$
if $x\in\R^N\setminus\Omega$ or $y\in \R^N\setminus\Omega$,
 then we have that
 $$\mathbb{G}_\alpha[D_i\delta_0](x)=(N-2\alpha)\frac{x_i}{|x|^{N-2\alpha+2}}- \partial_{x_i} g(x,0) ,$$
and
$$|\partial_{x_i} g(x,0)|\le c_8\rho^{\alpha-1}(x),$$
where $c_8>0$ and $\rho(x)={\rm dist}(x,\R^N\setminus\Omega)$.

\begin{proposition}\label{pr 3.1}
Assume that $p>1$, $\alpha\in(\frac12,1)$, and $k_i\in \N$  is from (\ref{3.5}).
Then 
\begin{equation}\label{3.0.1}
 k_i=0\quad{\rm for\ any}\quad i=1,\cdots,N.
\end{equation}

\end{proposition}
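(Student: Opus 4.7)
The approach relies on a Green's function representation of $u$ combined with a sharp $L^r$-integrability obstruction. The central observation is that $\mathbb{G}_\alpha[D_i\delta_0]$ exhibits the singularity $(N-2\alpha)x_i/|x|^{N-2\alpha+2}$ near $0$, which is strictly more singular than $\mathbb{G}_\alpha[\delta_0]\sim|x|^{2\alpha-N}$ and, crucially, is sign-changing; the nonnegativity of $u$ then forces $\mathbb{G}_\alpha[u^p]$ to be pointwise too large on a solid cone to belong to $L^r(\Omega)$ for $r$ close to the critical exponent, contradicting Proposition \ref{embedding}\,(iii).

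First I would establish the representation
$$u(x)=\mathbb{G}_\alpha[u^p](x)+k\,\mathbb{G}_\alpha[\delta_0](x)+\sum_{i=1}^N k_i\,\mathbb{G}_\alpha[D_i\delta_0](x)\quad\text{for }x\in\Omega\setminus\{0\},$$
where $\mathbb{G}_\alpha[D_i\delta_0]$ is interpreted as the partial derivative of $G_\alpha(\cdot,0)$ in the second argument at $0$, so that $(-\Delta)^\alpha\mathbb{G}_\alpha[D_i\delta_0]=D_i\delta_0$ in the distributional sense on $\Omega$ with vanishing exterior trace. Calling the right-hand side $u^*$, the identity (\ref{3.3}) together with (\ref{3.5}) shows that $(-\Delta)^\alpha(u-u^*)=0$ in $\cC^\infty_c(\Omega)'$ and $u-u^*=0$ on $\R^N\setminus\Omega$; uniqueness for the fractional Dirichlet problem then forces $u\equiv u^*$.

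Next, assume for contradiction that $\vec{k}:=(k_1,\dots,k_N)\neq 0$, set $e:=-\vec{k}/|\vec{k}|$, and for a small $\theta\in(0,1)$ consider the solid cone $C_\theta:=\{y\in\Omega\setminus\{0\}:\langle y,e\rangle\geq(1-\theta)|y|\}$. Using the decomposition $\mathbb{G}_\alpha[D_i\delta_0](y)=(N-2\alpha)y_i/|y|^{N-2\alpha+2}-\partial_{x_i}g(y,0)$, the uniform bound $|\partial_{x_i}g(y,0)|\leq c_8\rho^{\alpha-1}(y)\leq C$ near the interior point $0$, and the fact that $k\mathbb{G}_\alpha[\delta_0](y)\leq Ck\,|y|^{2\alpha-N}$ is of lower order than $|y|^{2\alpha-N-1}$, the representation together with $u(y)\geq0$ yields the one-sided bound
$$\mathbb{G}_\alpha[u^p](y)\;\geq\;\frac{c_0}{|y|^{N-2\alpha+1}}\qquad\forall\,y\in C_\theta\cap B_{\delta_0}(0),$$
for some constants $c_0,\delta_0>0$ depending on $|\vec{k}|,\theta$, and $\Omega$ (here the dominant $D_i$-contribution equals $-(N-2\alpha)(1-\theta)|\vec{k}|/|y|^{N-2\alpha+1}+O(1)$ on $C_\theta$).

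Finally I would derive the contradiction by integration. Since $u^p\in L^1(\Omega)$ by Proposition \ref{pr 1.1}, Proposition \ref{embedding}\,(iii) gives $\mathbb{G}_\alpha[u^p]\in L^r(\Omega)$ for every $r<N/(N-2\alpha)$. On the other hand, passing to polar coordinates $y=t\omega$ and denoting $|S_\theta|$ the solid angle of $C_\theta\cap S^{N-1}$,
$$\int_{C_\theta\cap B_{\delta_0}(0)}\mathbb{G}_\alpha[u^p](y)^r\,dy\;\geq\;c_0^{r}|S_\theta|\int_0^{\delta_0}t^{N-1-r(N-2\alpha+1)}\,dt,$$
which diverges whenever $r\geq N/(N-2\alpha+1)$. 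Since $\alpha>\tfrac12$ gives $N/(N-2\alpha+1)<N/(N-2\alpha)$ strictly, the interval $[N/(N-2\alpha+1),N/(N-2\alpha))$ is non-empty; choosing $r$ in it contradicts the $L^r$-bound. Hence $\vec{k}=0$. The main obstacle is Step~1: $D_i\delta_0$ is a distribution of order one rather than a Radon measure, so defining $\mathbb{G}_\alpha[D_i\delta_0]$ and verifying $(-\Delta)^\alpha\mathbb{G}_\alpha[D_i\delta_0]=D_i\delta_0$ requires passing the distributional derivative through the Green's operator, which is a careful integration-by-parts calculation using the explicit form of $G_\alpha$ and the regularity of $g(\cdot,0)$ away from $\partial\Omega$.
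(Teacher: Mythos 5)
Your proposal is correct and takes a genuinely different route from the paper. Both proofs start from the representation $u=\mathbb{G}_\alpha[u^p]+\Gamma$ with $\Gamma:=k\,\mathbb{G}_\alpha[\delta_0]+\sum_{i}k_i\,\mathbb{G}_\alpha[D_i\delta_0]$ and both exploit the fact that the $D_i\delta_0$-part of $\Gamma$ has the sign-changing singularity $\sim|y|^{-(N-2\alpha+1)}$. The paper then splits on $p$: for $p\ge \frac{N}{N-2\alpha+1}$ it works on the half-cone $A_t$ where $\Gamma>0$, so $u\ge\Gamma\sim|y|^{-(N-2\alpha+1)}$ and $u^p\notin L^1$, contradicting Proposition \ref{pr 1.1}; for $p<\frac{N}{N-2\alpha+1}$ it runs an iterated regularity bootstrap (via Proposition \ref{embedding}) to show $\mathbb{G}_\alpha[u^p]$ is strictly less singular than $|y|^{-(N-2\alpha+1)}$, hence $u=\mathbb{G}_\alpha[u^p]+\Gamma<0$ on the opposite half-cone $-A_t$. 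You instead work only on the side where the $D_i\delta_0$-part is negative: $u\ge0$ gives the one-sided lower bound $\mathbb{G}_\alpha[u^p](y)\ge c_0|y|^{-(N-2\alpha+1)}$ on a small cone near $0$, and you then compare $L^r$-norms: the lower bound forces $\mathbb{G}_\alpha[u^p]\notin L^r$ for $r\ge\frac{N}{N-2\alpha+1}$, while $u^p\in L^1$ and Proposition \ref{embedding}\,(iii) give $\mathbb{G}_\alpha[u^p]\in L^r$ for $r<\frac{N}{N-2\alpha}$, and these two ranges overlap (and lie in $(1,\infty)$ precisely because $\alpha>\frac12$). This unified $L^r$-obstruction replaces the paper's case split and bootstrap and is arguably cleaner; the paper's bootstrap, in exchange, produces a sharper pointwise estimate on $\mathbb{G}_\alpha[u^p]$ in the small-$p$ regime as a by-product. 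Both approaches share the delicate Step 1 (making rigorous the Green representation with the distributional datum $D_i\delta_0$ and invoking a Brezis–Lions-type uniqueness for $L^1$ solutions of the fractional Dirichlet problem), which the paper states as \eqref{3.2.1a} with little comment; your explicit flag of this as the main technical obstacle is appropriate.
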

\begin{proof}
Since
\begin{eqnarray*}
 \Gamma(x): &=& k \mathbb{G}_\alpha[\delta_{0}](x) +\sum_{i=1}^N k_i\mathbb{G}_\alpha[D_i\delta_0] \\
    &=& \frac{c_{N,\alpha}k}{|x|^{N-2\alpha}} +c_{N,\alpha}\sum_{i=1}^Nk_i\frac{x_i}{|x|^{N-2\alpha+2}}+c_{N,\alpha}\sum_{i=1}^N\partial_{x_i} g(x,0),
\end{eqnarray*}
where $g$ is a bounded function,
then $\Gamma$ must change signs if $k_i\not=0$ for some $i$.
Now assume that there exists $i $ such that $k_i\not=0$.
We observe that
\begin{equation}\label{3.2.1a}
u=\mathbb{G}_\alpha[u^p]+\Gamma
\end{equation}
and there is $t\in(0,1)$ such that
$$A_t:=\{x\in\Omega\setminus\{0\}:\, k_i\frac{x}{|x|}\cdot e_i>t\}\not=\emptyset$$
and
$$\frac1{c_9}|x|^{2\alpha-N-1}<\Gamma(x)\le c_9|x|^{2\alpha-N-1}+c_9\rho^{\alpha-1}(x),\quad x\in A_t,$$
where $c_9>1$.
So if $p\ge\frac{N}{N-2\alpha+1}$, then $\Gamma^p\not\in L^1( A_t)$ and $u^p\not\in L^1(\Omega)$,
which contradicts (\ref{lp}).

Now we only have to consider the case  that $p<\frac{N}{N-2\alpha+1}$. In order to obtain the contradiction,
we continue to estimate $\mathbb{G}_\alpha[u^p]$. Let
$$u_1= \mathbb{G}_\alpha[u^p ]. $$
We infer from $u^p\in L^{s_0}(\Omega)$ with $s_0=\frac{1}{2}[1+\frac1p\frac{N}{N-2\alpha+1}]>1$ and Proposition \ref{embedding} that
$u_1\in L^{s_1 p}(\Omega)$ and  $u_1^p\in L^{s_1}(\Omega)$ with
$$s_1=\frac1p\frac{N}{N-2\alpha s_0}s_0.$$
By \eqref{3.2.1a},
\begin{equation}\label{3.2.1b}
u^p\le c_{10}(u_1^p+ k^p|\Gamma|^p)\quad{\rm in}\ \ \Omega,
\end{equation}
where $c_{10}>0$. By the definition of $u_1$ and \eqref{3.2.1b}, we obtain
\begin{equation}\label{3.2.1c}
 u_1 \le c_{10}(\mathbb{G}_\alpha[u_1^p]+ k^p\mathbb{G}_\alpha[|\Gamma|^p]),
\end{equation}
where
$$ k^p\mathbb{G}_\alpha[ |\Gamma|^p](x)\le c_2|x|^{(2\alpha-1-N)p+2\alpha}$$
and
$$(2\alpha-1-N)p +2\alpha>2\alpha-1-N.$$
If $s_1>\frac1{2\alpha} Np$, by Proposition \ref{embedding},
$\mathbb{G}_\alpha[u_1^p]\in L^{\infty}(\Omega )$. Hence, we know from \eqref{3.2.1c} that
\begin{equation}\label{3.2.1d}
u_1(x)\leq c_{11}|x|^{(2\alpha-1-N)p+2\alpha},\quad\forall x\in\Omega\setminus\{0\}.
\end{equation}
In \eqref{3.2.1a},
$c_{N,\alpha}\sum_{i=1}^N k_i\frac{x_i}{|x|^{N-2\alpha+2}}$
has negative singularity as $|\cdot|^{2\alpha-1-N}$ in
$$-A_t:=\{x\in \Omega:\, -x\in A_t\},$$
then  from \eqref{3.2.1d} and  \eqref{3.2.1a} with $(2\alpha-1-N)p+2\alpha>2\alpha-1-N$,
 there exists some point $x_0\in -A_t$ such that
$u(x_0) <0,$
 which  is impossible since $u$ is nonnegative solution of (\ref{eq 1.1}).

On the other hand, if $s_1<\frac1{2\alpha} Np$, we proceed as above.
Let
$$u_2= \mathbb{G}_\alpha[ u_1^p]. $$
By Proposition \ref{embedding}, $u_2\in L^{s_2p}(\Omega)$, where
$$s_2=\frac1p\frac{Ns_1}{Np-2\alpha s_1 }>\frac{N}{N-s_0}s_1>\left(\frac1p\frac{N}{N-2\alpha s_0}\right)^2s_0.$$
Inductively, we define
$$s_m=\frac1p\frac{Ns_{m-1}}{Np-2\alpha s_{m-1}} >\left(\frac1p\frac{N}{N-2\alpha s_0}\right)^m s_0.$$
So there is $m_0\in\N$ such that
$$s_{m_0}>\frac{1}{2\alpha}Np$$
and by Proposition \ref{embedding} part $(i)$, it infers that
$$u_{m_0}\in L^\infty(\Omega).$$
Therefore, (\ref{3.2.1d}) holds true, it infers  that  $u(x_0)<0$ for some point $x_0\in \Omega\setminus\{0\}$ and we obtain a contradiction with  $u\ge0$.
\end{proof}

\medskip

\noindent{\bf Proof of Theorem \ref{teo 0}.} From Proposition \ref{pr 2.1} and Proposition \ref{pr 3.1}, there exists some $k\in\R$ such that
$u$ is a weak solution of
\begin{equation}\label{eq 3.1}
  \arraycolsep=1pt
\begin{array}{lll}
 \displaystyle   (-\Delta)^\alpha    u=u^p+k\delta_0\quad
 &{\rm in}\quad \Omega ,\\[2mm]
 \phantom{ (-\Delta)^\alpha  }
 \displaystyle   u=0\quad
 &{\rm in}\quad \R^N\setminus\Omega
\end{array}
\end{equation}
and then
$$u=\mathbb{G}_\alpha[u^p]+k\mathbb{G}_\alpha[\delta_0].$$
When $p\ge \frac{N}{N-2\alpha}$, if $k\not=0$,
$$u^p(x)\ge k^p\mathbb{G}_\alpha[\delta_0]^p(x)\ge c_{N,\alpha}^pk^p |x|^{-(N-2\alpha)p}\ge c_{N,\alpha}^pk^p|x|^{-N},\quad x\in \Omega\cap B_1(0)\setminus\{0\},$$
then $u^p\notin L^1(\Omega)$ and contradicts (\ref{lp}).
Therefore, if $p\ge \frac{N}{N-2\alpha}$, we have that $k=0$.

When $p\in (1,\frac{N}{N-2\alpha})$ and $k=0$,  then
$$u= \mathbb{G}_\alpha[u^p ]. $$
We infer from $u^p\in L^{t_0}(\Omega)$ with $t_0=\frac{1}{2}[1+\frac1p\frac{N}{N-2\alpha}]>1$ and Proposition \ref{embedding} that
$u\in L^{t_1 p}(\Omega)$ and  $u^p\in L^{t_1}(\Omega)$ with
$$t_1=\frac1p\frac{N}{N-2\alpha t_0}t_0.$$
If $t_1>\frac1{2\alpha} Np$, by Proposition \ref{embedding},
$u\in L^{\infty}(\Omega )$ and then it could be improved that $u$ is a classical solution of (\ref{eq 1.3}).

 If $t_1<\frac12 Np$, we proceed as above.
By Proposition \ref{embedding}, $u\in L^{t_2p}(\Omega)$, where
$$t_2=\frac1p\frac{Nt_1}{Np-2\alpha t_1 }>\frac{N}{N-t_0}t_1>\left(\frac1p\frac{N}{N-2\alpha t_0}\right)^2t_0.$$
Inductively, we define
$$t_m=\frac1p\frac{Nt_{m-1}}{Np-2\alpha t_{m-1}} >\left(\frac1p\frac{N}{N-2\alpha t_0}\right)^m t_0.$$
So there is $m_0\in\N$ such that
$$t_{m_0}>\frac{1}{2\alpha}Np$$
and by Proposition \ref{embedding} part $(i)$,
$$u\in L^\infty(\Omega),$$
 then it deduces that $u$ is a classical solution of (\ref{eq 1.3}).

 When $p\in (1,\frac{N}{N-2\alpha})$ and $k\not=0$, form observations that
$$ \lim_{x\to0} \mathbb{G}_\alpha[\delta_0](x)|x|^{N-2\alpha} =c_{N,\alpha}$$
and
\begin{equation}\label{13.2.1a}
u=\mathbb{G}_\alpha[u^p]+k\mathbb{G}_\alpha[\delta_0],
\end{equation}
we infer from $u^p\in L^{t_0}(\Omega)$ for any  $t_0\in (1,  \frac1p\frac{N}{N-2\alpha})$, letting
$$u_1= \mathbb{G}_\alpha[u^p ],$$
then it follows from Proposition \ref{embedding} that if $\frac1p\frac{N}{N-2\alpha}>\frac{Np}{2\alpha}$,
$u_1\in L^\infty(\Omega)$, then $u$ has asymptotic behavior $c_{N,\alpha}k|x|^{2\alpha-N}$ and we are done. If not,
$u_1\in L^{t_1 p}(\Omega)$ and  $u_1^p\in L^{t_1}(\Omega)$ with
$$t_1=\frac1p\frac{N}{N-2\alpha t_0}t_0.$$
By the Young's inequality,
\begin{equation}\label{13.2.1b}
u^p\le c_{11}\left(u_1^p+ |k|^p|x|^{p(2\alpha-N)}\right)\quad{\rm in}\ \ \Omega\setminus\{0\},
\end{equation}
where $c_{11}>0$. By the definition of $u_1$ and \eqref{13.2.1b}, we obtain
\begin{equation}\label{13.2.1c}
 u_1 \le c_{11}\left(\mathbb{G}_\alpha\left[u_1^p+ |k|^p|x|^{p(2\alpha-N)}\right]\right),
\end{equation}
where
$$ k^p\mathbb{G}_\alpha[ |\cdot|^{p(2\alpha-N)}](x)\le c_{12}|x|^{(2\alpha-N)p+2\alpha}$$
and
$$(2\alpha-N)p +2\alpha>2\alpha-N.$$
If $t_1>\frac1{2\alpha} Np$, by Proposition \ref{embedding},
$\mathbb{G}_\alpha[u_1^p]\in L^{\infty}(\Omega )$. Hence, we have that
\begin{equation}\label{13.2.1d}
u(x)\leq c_{13}\mathbb{G}_\alpha[u_1^p]+ c_{13}|x|^{(2\alpha -N)p+2\alpha}+k\mathbb{G}_\alpha[\delta_0](x),\quad\forall x\in\Omega\setminus\{0\}.
\end{equation}
Since $(2\alpha -N)p+2\alpha>2\alpha -N$, we deduce from  \eqref{13.2.1d} that
\begin{equation}\label{14.1}
\lim_{x\to0} u(x) |x|^{ N-2\alpha }=c_{N,\alpha} k.
\end{equation}

On the other hand, if $t_1<\frac1{2\alpha} Np$, we proceed as above.
Let
$$u_2= \mathbb{G}_\alpha[ u_1^p]. $$
By Proposition \ref{embedding}, $u_2\in L^{t_2p}(\Omega)$, where
$$t_2=\frac1p\frac{Ns_1}{Np-2\alpha t_1 }>\frac{N}{N-t_0}t_1>\left(\frac1p\frac{N}{N-2\alpha t_0}\right)^2t_0.$$
Inductively, we define
$$t_m=\frac1p\frac{Nt_{m-1}}{Np-2\alpha t_{m-1}} >\left(\frac1p\frac{N}{N-2\alpha t_0}\right)^m t_0.$$
So there is $m_0\in\N$ such that
$$t_{m_0}>\frac{1}{2\alpha}Np$$
and
$$u_{m_0}\in L^\infty(\Omega).$$
Therefore, by the assumption that $u$ is nonnegative, it is necessary that $k>0$ and
$$\lim_{x\to0}u(x)|x|^{N-2\alpha}=c_{N,\alpha}k.$$
This ends the proof. \hfill$\Box$

\section{Existence of weak solution}

\subsection{Minimal solution}

\noindent {\it Proof of Existence of the minimal solution in Theorem \ref{teo 1}.}   We first define the iterating sequence
$$v_0:= k \mathbb{G}_\alpha[\delta_{0}]>0,$$
and
$$
 v_n  =  \mathbb{G}_\alpha[v_{n-1}^p]+ k \mathbb{G}_\alpha[\delta_{0}].
$$
Observing that
$$v_1= \mathbb{G}_\alpha[(kv_0)^p] + k \mathbb{G}_\alpha[\delta_{0}]>v_0$$
and  assuming that
$$
v_{n-1} \ge  v_{n-2} \quad{\rm in} \quad \Omega\setminus\{0\},
$$
we deduce that
\begin{eqnarray*}
 v_n =   \mathbb{G}_\alpha[v_{n-1}^p]+ k \mathbb{G}_\alpha[\delta_{0}]
 \ge  \mathbb{G}_\alpha[v_{n-2}^p]+ k \mathbb{G}_\alpha[\delta_{0}]
 =   v_{n-1}.
\end{eqnarray*}
Thus, the sequence $\{v_n\}$ is a increasing with respect to $n$.
Moreover, we have that
\begin{equation}\label{4.2.3}
\int_{\Omega} v_n(-\Delta)^\alpha  \xi \,dx =\int_{\Omega} v_{n-1}^p\xi \,dx +k\xi(0), \quad \forall \xi\in C^\infty_c(\Omega).
\end{equation}

We next build an upper bound for the sequence $\{v_n\}$.  For  $t>0$, denote
\begin{equation}\label{wt}
 w_t=t k^p\mathbb{G}_\alpha[\mathbb{G}_\alpha^p[\delta_{0}]]+ k\mathbb{G}_\alpha[\delta_{0}]\le (c_2t k^p+ k)\mathbb{G}_\alpha[\delta_{0}],
\end{equation}
where $c_2>0$ is from Lemma \ref{lm 4.1},
 then
\begin{eqnarray*}
  \mathbb{G}_\alpha[w_t^p]+k\mathbb{G}_\alpha[\delta_{0}]\le  (c_2t k^p+ k)^p\mathbb{G}_\alpha[\mathbb{G}_\alpha^p[\delta_{0}]] +  k  \mathbb{G}_\alpha[\delta_{0}]
   \le  w_t,
\end{eqnarray*}
if
$$
 (c_2t k^p+ k)^p\le tk^p,
$$
that is
\begin{equation}\label{4.2.4}
  (c_2t k^{p-1} + 1)^p\le t.
\end{equation}

Note that the convex function $f_{k}(t) =  (c_2t k^{p-1} + 1)^p$  can intersect the line $g(t) = t$,  if
\begin{equation}\label{4.2.5}
    c_2 k^{p-1}\le \frac1p\left(\frac{p-1}p\right)^{p-1}.
\end{equation}
Let $k_p=\left(\frac1{c_2 p}\right)^{\frac1{p-1}}\frac{p-1}p$, then if $k\le k_p$, it always hold that $f_{k}(t_p)\le t_p$ for $ t_p=\left(\frac p{p-1}\right)^{p}.$
 Hence, for $t_p$ we have chosen,  by the definition of $w_{t_p}$, we have  $w_{t_p}>v_0$ and
$$v_1= \mathbb{G}_\alpha[v_0^p]+ k \mathbb{G}_\alpha[\delta_0]<\mathbb{G}_\alpha[w_{t_p}^p]+ k \mathbb{G}_\alpha[\delta_0]=w_{t_p}.$$
Inductively, we obtain
\begin{equation}\label{2.10a}
v_n\le w_{t_p}
\end{equation}
for all $n\in\N$. Therefore, the sequence $\{v_n\}$ converges. Let $u_{k}:=\lim_{n\to\infty} v_n$. By \eqref{4.2.3}, $u_{k}$ is a weak solution of (\ref{eq 1.2}).

We claim that $u_{k}$ is the minimal solution of (\ref{eq 1.1}), that is, for any positive solution $u$ of (\ref{eq 1.2}), we always have $u_{k}\leq u$. Indeed,  there holds
\[
 u  = \mathbb{G}_\alpha[  u^p]+ k \mathbb{G}_\alpha[\delta_0]\ge v_0,
\]
and then
\[
 u  = \mathbb{G}_\alpha[ u^p]+ k \mathbb{G}_\alpha[\delta_0]\ge \mathbb{G}_\alpha[ v_0^p]+ k \mathbb{G}_\alpha[\delta_0]=v_{1}.
\]
We may show inductively that
\[
u\ge v_n
\]
for all $n\in\N$.  The claim follows.

 Similarly,  if problem (\ref{eq 1.2}) has a nonnegative solution $u$  for $ k_1>0$, then (\ref{eq 1.2}) admits a minimal solution $u_{k}$ for all $ k\in(0, k_1]$. As a result, the mapping $ k\mapsto u_{k}$ is increasing.
So we may define
$$k^*=\sup\{k>0:\ (\ref{eq 1.1})\ {\rm has\ minimal\ solution\ for\ }k \},$$
then $k^*$ is the largest $k$ such that problem (\ref{eq 1.2}) has minimal positive solution, and
$$k^*\ge k_p.$$

We next prove that $\lambda^*<+\infty$.  Let $(\lambda_1,\varphi_1)$ be the first eigenvalue and positive eigenfunction of $(-\Delta)^\alpha$ in $H^\alpha_0(\Omega)$,
see Proposition 5 in \cite{SV}
then by the fact that
$$u_k\ge k\mathbb{G}_\alpha[\delta_0]\ge c_{14}k \quad{\rm in}\quad B_r(0)$$
for some $r>0$ satisfying $B_{2r}(0)\subset \Omega$. There exists $c_{14}>1$ such that
$$c_{14}\int_{B_r(0)} u_{k} \varphi_1\,dx\ge \int_{\Omega} u_{k} \varphi_1\,dx$$
and we imply that
\begin{eqnarray*}
c_{14}\lambda_1\int_{B_r(0)} u_{k} \varphi_1\,dx &\ge& \lambda_1 \int_{\Omega} u_{k} \varphi_1\,dx= \int_{\Omega} u_{k}(-\Delta)^\alpha \varphi_1\,dx\\& \ge  &\int_{\Omega}   u_{k}^p\varphi_1 \,dx+k\varphi_1(x_0)
 \ge  c_{15}  k^{p-1} \int_{B_r(0)} u_{k} \varphi_1\,dx,
\end{eqnarray*}
then $k$ must satisfy
$$
 k\le c_{16}\lambda_1^{-\frac1{p-1}} .
$$
where $c_{15},\, c_{16}>0$.
As a conclusion,  there exists $c_{17}>0$ such that
\begin{equation}\label{2.14}
 k^*\le c_{17} \lambda_1^{-\frac1{p-1}}.
\end{equation}

{\it Regularity of very weak of solution of (\ref{eq 1.2}). }  Let $u$ be a very weak solution of (\ref{eq 1.2}) and $x_0\in \Omega\setminus\{0\}$, then
\begin{eqnarray*}
u  &=& \mathbb{G}_\alpha[u^p]+k\mathbb{G}_\alpha[\delta_0]  \\
   &=&  \mathbb{G}_\alpha[u^p\chi_{B_r(x_0)}]+\mathbb{G}_\alpha[u^p\chi_{\Omega\setminus B_r(x_0)}] +k\mathbb{G}_\alpha[\delta_0]
\end{eqnarray*}
where $\mathbb{G}_\alpha[\delta_0]$ is $C^\infty_{loc}(\Omega\setminus\{0\})$, $r>0$ such that $ \overline{B_{2r}(0)}\subset \Omega\setminus\{0\}$.
 Let $B_i=B_{2^{-i}r}(x_0)$.
For $x\in B_{i}$, we have that
$$\mathbb{G}_\alpha[\chi_{\Omega\setminus B_{i-1}} u^p](x)=\int_{\Omega\setminus B_{i-1}}   u^p(y)G_\alpha(x,y)dy,$$
then, for some $C_i>0$, we have
\begin{equation}\label{3.01-1}
\norm{\mathbb{G}_\alpha[\chi_{\Omega\setminus B_{i}}  u^p]}_{C^2(B_{i-1})}\le C_i \norm{ u^p}_{L^1(B_{2r}(x_0))}
\end{equation}
and for some   constant $c_i>0$ depending on $i$, we have \begin{equation}\label{3.01}
\norm{\mathbb{G}_\alpha[\delta_0]}_{C^2(B_{i-1})} \le   c_i|x_0|^{2-N}.
\end{equation}
By  Proposition \ref{embedding}, $u^p\in L^{q_0}(B_{2r_0}(x_0))$ with $q_0=\frac12(1+\frac1p\frac{N}{N-2\alpha})>1$.
By Proposition \ref{embedding} again we find
$$\mathbb{G}_\alpha[\chi_{B_{2r}(x_0)} u^p]\in L^{p_1}(B_{2r}(x_0))\ {\rm with }\ \ p_1=\frac{Nq_0}{N-2\alpha q_0}.$$
Similarly,
$$ u^p\in L^{q_1}(B_{r}(x_0))\ {\rm with }\ \   q_1=\frac{p_1}{p},$$
and
$$\mathbb{G}_\alpha[\chi_{B_{r}(x_0)}u^p]\in L^{p_2}(B_{r}(x_0))\ {\rm with }\ \ p_2=\frac{Nq_1}{N-2\alpha q_1}.$$
Let $q_i=\frac{p_{i}}{p}$ and $p_{i+1}=\frac{Nq_i}{N-2q_i}$ if $N-2q_i>0$. Then we obtain inductively that
$$ u^p\in L^{q_i}(B_{i}) \quad\mbox{and}
\quad \mathbb{G}[\chi_{B_{i}} u^p]\in L^{p_{i+1}}(B_{i}).$$
We may verify that
$$\frac{q_{i+1}}{q_i}=\frac1p\frac N{N-2\alpha q_i}>\frac1p\frac{N}{N-2\alpha q_1}>1.$$
Therefore,
$\lim_{i\to+\infty} q_i=+\infty,$
so there exists $i_0$ such that $N-2q_{i_0}>0$, but $N-2q_{i_0+1}<0$, and we deduce that
$$\mathbb{G}_\alpha[\chi_{B_{i_0}} u^p]\in L^{\infty}(B_{{i_0}}).$$
As a result,
$$ u\in L^{\infty}(B_{i_0}).$$
By elliptic regularity, we know from (\ref{3.01}) that $u$ is H\"{o}lder continuous in $B_{{i_0}}$ and  so is $ u^p$.  Hence, $u$ is a classical solution of  (\ref{eq 1.1}).
  \hfill$\Box$

\subsection{Stability}

In what follows, we discuss the stability of the minimal solution of (\ref{eq 1.1}).

\begin{definition}\label{def 1}
A solution (or weak solution) $u$ of (\ref{eq 1.1}) is stable (resp. semi-stable) if
$$
\norm{\xi}_\alpha^2> p\int_{\Omega}  u^{p-1}\xi^2 \,dx,\quad ({\rm resp.}\ \ge)\quad \forall \xi\in H^{\alpha}_0(\Omega)\setminus\{0\}.
$$

\end{definition}

\begin{proposition}\label{pr 3.2}

For $k\in (0,k^*)$, let $u_{k}$ be the minimal positive solution of (\ref{eq 1.1}) by Thoerem \ref{teo 0}.
Then $u_{k}$ is stable.

Moreover, there exists $c_{18}>0$ such that for any $\xi\in H^\alpha_0(\Omega)\setminus\{0\},$
\begin{equation}\label{e.3.1}
\norm{\xi}_\alpha^2- p\int_{\Omega} u_{k}^{p-1}\xi^2 \,dx\ge c_{18}\left((k^*)^{\frac{p-1}p}-k^{\frac{p-1}p}\right)\norm{\xi}_\alpha^2.
\end{equation}

\end{proposition}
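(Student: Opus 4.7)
My plan is to prove (\ref{e.3.1}) by combining a \emph{linear scaling comparison} between minimal solutions at different parameters with the \emph{semi-stability} of each $u_{k'}$, then converting the resulting bound into the stated $(p-1)/p$-form through the elementary inequality $r^p \le r$ on $[0,1]$.

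For the linear scaling, fix $0 < k < k' < k^*$ and set $v := (k/k')\,u_{k'}$. Using that $u_{k'}$ solves (\ref{eq 1.2}) at parameter $k'$,
\begin{equation*}
(-\Delta)^\alpha v = (k/k')\,u_{k'}^p + k\,\delta_0 \;\ge\; (k/k')^p u_{k'}^p + k\,\delta_0 = v^p + k\,\delta_0,
\end{equation*}
since $k/k' \in (0,1]$ and $p > 1$. Because $v \ge k\,\mathbb{G}_\alpha[\delta_0]$, the monotone iteration of Section 4.1 stays below $v$, and minimality of $u_k$ forces $u_k \le (k/k')\,u_{k'}$. For the semi-stability $\|\xi\|_\alpha^2 \ge p\int_\Omega u_{k'}^{p-1}\xi^2\,dx$ (for every $k' \in (0, k^*)$ and every $\xi \in H^\alpha_0(\Omega)$), I would argue by contradiction from minimality: if the smallest eigenvalue $\mu_1(k')$ of $(-\Delta)^\alpha - p\,u_{k'}^{p-1}$ on $H^\alpha_0(\Omega)$ (with positive first eigenfunction $\varphi_1$ from \cite{SV} combined with Proposition \ref{pr 2.2}) were negative, then the Taylor expansion $(u_{k'}-\varepsilon\varphi_1)^p = u_{k'}^p - p\varepsilon u_{k'}^{p-1}\varphi_1 + O(\varepsilon^2)$ and the eigenvalue equation make $u_{k'} - \varepsilon\varphi_1$ a strict very weak supersolution of (\ref{eq 1.2}) for small $\varepsilon > 0$, and monotone iteration between $0$ and this supersolution produces a positive solution strictly below $u_{k'}$, contradicting minimality.

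Combining the two ingredients gives, for every $k < k' < k^*$,
\begin{equation*}
p\int_\Omega u_k^{p-1}\xi^2\,dx \;\le\; (k/k')^{p-1}\,p\int_\Omega u_{k'}^{p-1}\xi^2\,dx \;\le\; (k/k')^{p-1}\|\xi\|_\alpha^2,
\end{equation*}
and letting $k' \uparrow k^*$ (only in the right-hand side --- no use of $u_{k^*}$ is required) yields $\|\xi\|_\alpha^2 - p\int_\Omega u_k^{p-1}\xi^2\,dx \ge (1 - (k/k^*)^{p-1})\|\xi\|_\alpha^2$. Writing $a = (k^*)^{(p-1)/p}$ and $b = k^{(p-1)/p}$, the elementary inequality $(b/a)^p \le b/a$ (valid since $b/a \in [0,1]$ and $p > 1$) yields $a^p - b^p \ge a^{p-1}(a-b)$, hence
\begin{equation*}
1 - (k/k^*)^{p-1} = \frac{a^p - b^p}{a^p} \;\ge\; \frac{a - b}{a} = (k^*)^{-(p-1)/p}\bigl((k^*)^{(p-1)/p} - k^{(p-1)/p}\bigr),
\end{equation*}
proving (\ref{e.3.1}) with $c_{18} = (k^*)^{-(p-1)/p}$. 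Strict stability in the sense of Definition \ref{def 1} is automatic for every $\xi \not\equiv 0$ since the right-hand side is strictly positive for $k < k^*$.

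The main technical obstacle is the semi-stability step: the weight $u_{k'}^{p-1}$ behaves like $|x|^{(2\alpha-N)(p-1)}$ near the origin, so one must carefully set up the eigenvalue problem for $(-\Delta)^\alpha - p\,u_{k'}^{p-1}$ on $H^\alpha_0(\Omega)$ and justify the perturbation argument at the level of very weak solutions carrying a Dirac mass. The subcritical bound $p < N/(N-2\alpha)$ ensures $(N-2\alpha)(p-1) < 2\alpha$, and Proposition \ref{pr 2.2} (applied with $s = (N-2\alpha)(p-1)$) supplies the weighted compact embedding that supports both the Rayleigh-quotient construction of $\varphi_1$ and the control of the $O(\varepsilon^2)$ remainder in a suitable dual norm.
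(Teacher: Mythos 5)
Your proof shares the paper's two-ingredient strategy (a scaling comparison $u_k\le\theta u_{k'}$ plus semi-stability of $u_{k'}$), but both ingredients are executed differently, and the semi-stability step has a gap.

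The scaling step is fine and in fact slightly tighter than the paper's: you take $v=(k/k')u_{k'}$, whereas the paper takes $l_0u_{k'}$ with $l_0=(k/k')^{1/p}$; both are supersolutions and both yield the conclusion, and your elementary inequality converting $1-(k/k^*)^{p-1}$ into the $(p-1)/p$-form is correct (the paper instead fixes $k'=(k+k^*)/2$ and uses concavity of $t\mapsto t^{(p-1)/p}$).

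The genuine gap is in the semi-stability step, which is where the paper's route and yours really diverge. The paper does \emph{not} perturb $u_{k'}$ by its first eigenfunction; it argues by contradiction using the difference $w:=u_{\hat k}-u_k>0$ for $\hat k\in(k,k^*)$, applies the one-sided convexity inequality $(a+b)^p\ge a^p+pa^{p-1}b$ (which produces no remainder at all) to get $w\ge\mathbb{G}_\alpha[pu_k^{p-1}w]+(\hat k-k)\mathbb{G}_\alpha[\delta_0]$, and pairs this with the eigenfunction $\xi_1$ to force $\sigma_1>1$. Your route requires the \emph{pointwise} inequality
\begin{equation*}
u_{k'}-\varepsilon\varphi_1\ \ge\ \mathbb{G}_\alpha\bigl[(u_{k'}-\varepsilon\varphi_1)^p\bigr]+k'\,\mathbb{G}_\alpha[\delta_0],
\end{equation*}
since that is what keeps the monotone iterates below $u_{k'}-\varepsilon\varphi_1$. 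But the Taylor remainder $\tfrac{p(p-1)}{2}\varepsilon^2(u_{k'}-\theta\varepsilon\varphi_1)^{p-2}\varphi_1^2$ is not dominated pointwise by the first-order gain $-\varepsilon\mu_1\varphi_1$ near the origin whenever $p>2$ (possible exactly when $N<4\alpha$): there $u_{k'}^{p-2}\varphi_1$ blows up like $|x|^{(2\alpha-N)(p-2)}$, so the pointwise supersolution inequality fails on a neighbourhood of $0$ for every $\varepsilon>0$. The argument can be rescued by pushing the remainder through the Green operator (by Lemma \ref{lm 4.1} it is bounded since $(N-2\alpha)(p-2)<4\alpha-N<2\alpha$ under the subcritical assumption) and then comparing against $\mathbb{G}_\alpha[\varphi_1]$ using a boundary Hopf-type estimate, but none of this is done; you flag the weight $u_{k'}^{p-1}$ as the difficulty, whereas the real obstruction is the second-order term $u_{k'}^{p-2}\varphi_1^2$, which is precisely what the paper's convexity inequality sidesteps.
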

\begin{proof}  {\it   To prove the  stability when $k>0$ small.}
When $k>0$ small, the iteration  procedure: $v_n=\mathbb{G}_\alpha[v_{n-1}^p]+ k\mathbb{G}_\alpha[\delta_{0}]$ is controlled by  super solution $w_{t_p}$,
where
$$ w_{t_p}=t_p k^p\mathbb{G}_\alpha[\mathbb{G}_\alpha^p[\delta_{0}]]+ k\mathbb{G}_\alpha[\delta_{0}],$$
then  $u_k\le w_{t_p}$ and  there exists $c_{19}>0$   such that
$$u_{k}(x)\le c_{19}k|x|^{2\alpha-N},\qquad\forall x\in \Omega\setminus\{0\}.$$
So there holds
$$ u_{k}^{p-1}(x)\le c_{20}k^{p-1} |x|^{(2\alpha-N)(p-1)}.$$
Then it follows by Proposition \ref{pr 2.2} that
\begin{eqnarray*}
\int_{\Omega} u_{k}^{p-1}\xi^2 \,dx  \le    c_{20} k^{p-1}\int_{\Omega} \frac{\xi_n^2(x)}{|x|^{(N-2\alpha)(p-1)}} \,dx
   <   \frac1p\norm{\xi}^2_\alpha
\end{eqnarray*}
if $k>0$ sufficient small.
Then $u_{k} $ is a stable solution of (\ref{eq 1.1}) for $k>0$ small.\smallskip

\emph{Now we prove the stability for  $k\in(0,k^*)$.}
Suppose that if $u_k$ is not stable, then we have that
\begin{equation}\label{e.3.5}
\sigma_1:= \inf_{\xi\in H^\alpha_0(\Omega)\setminus\{0\}}\frac{\norm{\xi}_\alpha^2}{p\int_{\Omega}  u_k^{p-1}\xi^2 \,dx}\le 1.
\end{equation}
 By Proposition \ref{pr 2.2}, $\sigma_1$ is achievable and  its achieved function $\xi_1$ could be setting to be nonnegative and   satisfies
$$(-\Delta)^\alpha    \xi_1 =  \sigma_1 pu_{k}^{p-1}   \xi_1.$$
Choosing $\hat{k}\in (k,k^*)$ and letting $w=u_{\hat k } -u_{k }>0$, then we have that
$$w=\mathbb{G}_\alpha[u_{\hat k}^p- u_{k}^p]+(\hat{k}-k)\mathbb{G}_\alpha[\delta_0].$$
By the elementary inequality
$$(a+b)^p\ge a^p+pa^{p-1}b \quad{\rm for}\ \ a, b\ge 0,$$
we infers that
$$w\ge  \mathbb{G}_\alpha[p u_{k}^{p-1} w]+(\hat{k}-k)\mathbb{G}_\alpha[\delta_0].$$
Then
\begin{eqnarray*}
\sigma_1\int_{\Omega}pu_{k}^{p-1} w\xi_1\,dx    &= &  \int_{\Omega} \xi_1(-\Delta)^\alpha w\,dx
 \\ &\ge & \int_{\Omega}p u_{k}^{p-1} w\xi_1\,dx+(\hat{k}-k)\xi_1(0)>   \int_{\Omega}p u_{k}^{p-1} w\xi_1\,dx,
\end{eqnarray*}
which is impossible. Consequently,
$$
 p\int_{\Omega}    u_{k}^{p-1}  \xi^2 \,dx> \norm{\xi}_\alpha^2,\qquad \forall \xi\in H^\alpha_0(\Omega).
$$
As a conclusion, we derive that $u_{k}$ is stable for $k<k^*$.

{\it To prove (\ref{e.3.1}).} For any $k\in (0,k^*),$ let $k'=\frac{k+k^*}{2}>k$ and $l_0=(\frac k{k'})^{\frac1{p}}<1$, then
we see that the minimal solution $u_{k'}$ of (\ref{eq 1.1}) with $k'$ is   stable and
\begin{eqnarray*}
 l_0u_{k'}&\ge&l_0^p u_{k'}
 \\&=&l_0^p\left(\mathbb{G}_\alpha[u_{k'}^p]+k'\mathbb{G}_\alpha[\delta_0]\right)+(k-k'l_0^p)\left\{\mathbb{G}_\alpha[\frac{u_{k'}}{|x|^2}]+\mathbb{G}_\alpha[\delta_0]\right\}
 \\&=& \mathbb{G}_\alpha[(l_0u_{k'})^p]+\mathbb{G}_\alpha[\frac{l_0u_{k'}}{|x|^2}]+k\mathbb{G}_\alpha[\delta_0],
\end{eqnarray*}
where we have used $k-k'l_0^p=0$. Thus, we have that $l_0u_{k'}$ is the minimal solution of (\ref{eq 1.1}) and
we have that
$$l_0u_{k'}\ge u_{k},$$
so for $\xi\in  H^\alpha_0(\Omega)\setminus\{0\}$, we have that
\begin{eqnarray*}
0< \norm{\xi}_\alpha^2- p\int_{\Omega}   u_{k'}^{p-1}\xi^2 \,dx
& \le&   \norm{\xi}_\alpha^2- pl_0^{1-p}\int_{\Omega}   u_{k}^{p-1}\xi^2 \,dx
\\&=&l_0^{1-p}\left[l_0^{p-1}\norm{\xi}_\alpha^2- p\int_{\Omega}  u_{k}^{p-1}\xi^2 \,dx\right],
\end{eqnarray*}
thus,
\begin{eqnarray*}
 \norm{\xi}_\alpha^2- p\int_{\Omega} u_{k}^{p-1}\xi^2 \,dx
 &=& (1-l_0^{p-1})\norm{\xi}_\alpha^2+  \left[l_0^{p-1}\norm{\xi}_\alpha^2- p\int_{\Omega} u_{k}^{p-1}\xi^2 \,dx\right]
   \\ & \ge& (1-l_0^{p-1}) \norm{\xi}_\alpha^2,
\end{eqnarray*}
which together with the fact that
 $$ 1-l_0^{p-1}\ge c_{21}[(k^*)^{\frac{p-1}p}-k^{\frac{p-1}p}], $$
 implies (\ref{e.3.1}).
\end{proof}

\subsection{Extremal solution } We would like to approach the weak solution when $k=k^*$ by the minimal solution $u_k$ with $k<k^*$.

\smallskip

\noindent{\it Proof of Theorem \ref{teo 1} in the case of $k=k^*$. }
Let $(\lambda_1,\varphi_1)$ be the first eigenvalue and positive eigenfunction of $(-\Delta)^\alpha$ in $H^\alpha_0(\Omega)$, then for $k\in(0,k^*)$
\begin{eqnarray*}
  \int_\Omega u_k^p\varphi_1\,dx
&=&   \int_\Omega u_k(-\Delta)^\alpha\varphi_1\, dx- k\varphi_1(0)\\
&< &    \lambda_1 \left(\int_\Omega u_k^p\varphi_1\,dx\right)^{\frac1p} \left(\int_\Omega  \varphi_1\,dx\right)^{1-\frac1p}
\end{eqnarray*}
which implies that
$$\norm{u_k}_{L^p(\Omega,\rho^\alpha\,dx)}\le \lambda_1^{\frac{p}{p-1}}\int_\Omega  \varphi_1\,dx.$$
Combine the mapping $k\mapsto u_k$ is increasing, then $u_{k^*}=\lim_{k\nearrow k^*} u_k$ exists and $u_k\to u_{k^*}$ in $  L^p(\Omega,\rho^\alpha\,dx)$,
thus,
$$\int_{\Omega} u_{k^*}(-\Delta)^\alpha  \xi \,dx =\int_{\Omega} u_{k^*}^p\xi \,dx +{k^*}\xi(0), \quad \forall \xi\in C^\infty_c(\Omega).$$
So we conclude that (\ref{eq 1.1}) has a weak solution and then (\ref{eq 1.1}) has minimal solution $u_{k^*}$.

{\it To prove that $u_{k^*}$ is semi-stable.}
For any $\epsilon>0$ and $\xi\in H^\alpha_0(\Omega)\setminus\{0\}$, there exists $k(\epsilon)>0$ such that for all $k\in(k(\epsilon),k^*)$,
\begin{eqnarray*}
p\int_{\Omega} u_{k^*}^p\xi \,dx   \le   p\int_{\Omega} u_{k}^p\xi \,dx    +
  (k^*-k) p\int_{\Omega} u_{k^*}^p\xi \,dx \le   \norm{\xi}^2_{\alpha} +\epsilon
\end{eqnarray*}
By the arbitrary of $\epsilon$, we have that
$u_{k^*}$ is semi-stable.

{\it To prove the uniqueness. } If problem (\ref{eq 1.1}) admits a solution $u>u_{k^*}$.
$$
\sigma_1:= \inf_{\xi\in H^\alpha_0(\Omega)\setminus\{0\}}\frac{\norm{\xi}^2_{\alpha}}{p\int_{\Omega}  u_{k^*}^{p-1}\xi^2 \,dx}\ge 1.
$$
 By the compact embedding theorem, $\sigma_1$ is achievable and  its achieved function $\xi_1$ could be setting to be nonnegative and   satisfies
$$(-\Delta)^\alpha    \xi_1   =  \sigma_1 pu_{k}^{p-1}   \xi_1\ \ {\rm in}\ \ \Omega, \qquad \xi=0\ \ {\rm in}\ \ \R^N\setminus\Omega.$$
Letting $w=u-u_{k^* }>0$, then we have that
$$w=\mathbb{G}_\alpha[u_{k^*}^p- w^p].$$
By the elementary inequality
$$(a+b)^p> a^p+pa^{p-1}b \quad{\rm for}\ \ a>b>0,$$
we infers that
$$w> \mathbb{G}_\alpha[p u_{k^*}^{p-1} w],$$
then
\begin{eqnarray*}
\sigma_1\int_{\Omega}pu_{k^*}^{p-1} w\xi_1\,dx    =     \int_{\Omega}(-\Delta) w \xi_1\,dx
 >   \int_{\Omega}p u_{k^*}^{p-1} w\xi_1\,dx
\end{eqnarray*}
which is impossible with $\sigma_1\le 1$.  As a conclusion, $u_{k^*}$ is the unique solution of (\ref{eq 1.1}) with $k=k^*$.\hfill$\Box$

\subsection{Mountain-Pass type solution}

For the second solution of (\ref{eq 1.1}), we would like to apply the Mountain-Pass theorem to find a positive weak solution of
\begin{equation}\label{eq 4.1}
  \arraycolsep=1pt
\begin{array}{lll}
 \displaystyle   (-\Delta)^\alpha   u= (u_{k}+u_+)^{p}- u_{k}^p \quad
 &{\rm in}\quad \Omega,\\[2mm]
 \phantom{  (-\Delta)^\alpha}
 u =0 &{\rm in}\quad \R^N\setminus\Omega,
\end{array}
\end{equation}
where $k\in (0,k^*)$ and $u_{k}$ is the minimal positive solution of (\ref{eq 1.1}) obtained by Thoerem \ref{teo 0}.
The second solution of (\ref{eq 1.1}) is derived by following proposition.
\begin{proposition}\label{pr 4.1}
Assume that   $p\in(1,\frac N{N-2\alpha})$, $k\in (0,k^*)$   and
$u_{k}$ is the minimal positive solution of (\ref{eq 1.1}) obtained by Thoerem \ref{teo 0}.

Then problem (\ref{eq 4.1}) has a positive solution $v_k>u_{k}$.

\end{proposition}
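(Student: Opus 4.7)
The strategy is to find a nontrivial critical point of the functional
\[
J(v)=\tfrac12\norm{v}_\alpha^2-\int_\Omega F(x,v)\,dx,\qquad F(x,s)=\int_0^s\bigl[(u_k(x)+\tau_+)^p-u_k^p(x)\bigr]\,d\tau,
\]
via the Mountain Pass Theorem in $H^\alpha_0(\Omega)$; any such critical point is a weak solution of (\ref{eq 4.1}). The subcriticality $p<\frac{N}{N-2\alpha}$ gives $p+1<\frac{2N}{N-2\alpha}$, so Proposition \ref{pr 2.2} furnishes the compact embeddings $H^\alpha_0(\Omega)\hookrightarrow L^{p+1}(\Omega)$ and $H^\alpha_0(\Omega)\hookrightarrow L^2(\Omega,|x|^{-s}dx)$ with $s=(N-2\alpha)(p-1)<2\alpha$; both will be used.

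\textbf{Mountain--pass geometry.} Clearly $J(0)=0$. For the local barrier, a Taylor expansion of the primitive gives $F(x,v)=\tfrac{p}{2}u_k^{p-1}v_+^2+R(x,v)$ with $|R(x,v)|\le C v_+^{p+1}$, so the strict stability inequality (\ref{e.3.1}) from Proposition \ref{pr 3.2}, combined with the Sobolev bound on $R$, yields
\[
J(v)\ge\tfrac12\,c_{18}\bigl((k^*)^{\frac{p-1}{p}}-k^{\frac{p-1}{p}}\bigr)\norm{v}_\alpha^2-C\norm{v}_\alpha^{p+1},
\]
and since $p+1>2$ there exist $\rho,\beta>0$ such that $J\ge\beta$ on the sphere $\norm{v}_\alpha=\rho$. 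For a ``far'' point, fix $\phi\in C_c^\infty(\Omega)$ with $\phi\ge 0$, $\phi\not\equiv 0$; the elementary inequality $(u_k+t\phi)^p-u_k^p\ge(t\phi)^p$ valid for $t\phi\ge 0$ gives $F(x,t\phi)\ge\tfrac{(t\phi)^{p+1}}{p+1}$, whence
\[
J(t\phi)\le\tfrac{t^2}{2}\norm{\phi}_\alpha^2-\tfrac{t^{p+1}}{p+1}\int_\Omega\phi^{p+1}\,dx\to-\infty,\qquad t\to+\infty.
\]

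\textbf{Palais--Smale and positivity.} Choosing $\mu\in(2,p+1)$, one checks the Ambrosetti--Rabinowitz condition $s f(x,s)\ge\mu F(x,s)$ for $s\ge s_0$ large, where $f(x,s)=(u_k+s_+)^p-u_k^p$; the asymptotics $sf\sim s^{p+1}$, $\mu F\sim\tfrac{\mu}{p+1}s^{p+1}$ handle large $s$, and the bounded remainder for small $s$ is absorbed. Subcriticality together with compactness of $H^\alpha_0\hookrightarrow L^{p+1}(\Omega)$ then gives boundedness and strong convergence of any PS sequence along standard lines. The Mountain Pass Theorem produces a nontrivial critical point $v_k\in H^\alpha_0(\Omega)$ at level $c\ge\beta>0$. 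Testing the equation against $(v_k)_-$ and using the truncation $(\cdot)_+$ in the nonlinearity yields $\norm{(v_k)_-}_\alpha=0$, so $v_k\ge 0$; then from $v_k=\mathbb{G}_\alpha[(u_k+v_k)^p-u_k^p]$ and a strong maximum principle argument one obtains $v_k>0$ in $\Omega$. Consequently $u_k+v_k>u_k$ is the second (Mountain--Pass type) solution of the original problem (\ref{eq 1.2}).

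\textbf{Main obstacle.} The critical point of the argument is the local barrier at zero, which depends on a strictly positive spectral gap of the linearization at $u_k$; this gap degenerates as $k\to k^*$, and the quantitative stability estimate (\ref{e.3.1}) is precisely what keeps it positive for every $k<k^*$. Controlling the singular weight $u_k^{p-1}(x)\lesssim k^{p-1}|x|^{-(N-2\alpha)(p-1)}$ appearing in the quadratic form relies on the weighted compact embedding of Proposition \ref{pr 2.2}, which is usable only because of the subcriticality $(N-2\alpha)(p-1)<2\alpha$, equivalent to $p<\tfrac{N}{N-2\alpha}$. The verification of the PS condition at the mountain--pass level, although routine, must be performed in the weighted setting so that the linear term $p u_k^{p-1}v$ is handled by the same compact embedding rather than by the unavailable Sobolev embedding at the exponent $p=\frac{N}{N-2\alpha}$.
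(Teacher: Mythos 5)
Your strategy is the one the paper uses: pass to the translated functional around $u_k$, establish mountain-pass geometry with the quantitative stability estimate (\ref{e.3.1}) providing the local barrier, get the far point from subcritical growth, and verify (PS) with the weighted compact embedding of Proposition~\ref{pr 2.2}. Two steps, however, are stated in a form that does not quite hold.

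\emph{Local barrier.} You bound the Taylor remainder by $|R(x,v)|\le C v_+^{p+1}$, but this is false when $p>2$ (which is permitted here, e.g.\ $N=2,\ \alpha>1/2$). Writing $F(s,t)-\tfrac p2 s^{p-1}t^2=\tfrac{s^{p+1}}{p+1}g(t/s)$ with $g(r)=(1+r)^{p+1}-1-(p+1)r-\tfrac{p(p+1)}{2}r^2$, one has $g(r)\sim\tfrac{(p+1)p(p-1)}{6}\,r^3$ as $r\to 0$, so $g(r)/r^{p+1}\to\infty$ for $p>2$. What is needed (and what the paper effectively uses) is the absorption form: for every $\epsilon>0$ there is $C_\epsilon$ with $0\le F(s,t)\le(\tfrac p2+\epsilon)s^{p-1}t^2+C_\epsilon t^{p+1}$, and then $\epsilon$ is taken small relative to the spectral gap in (\ref{e.3.1}). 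The pure $C v_+^{p+1}$ bound only works for $1<p\le 2$.

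\emph{(PS) bound.} The Ambrosetti--Rabinowitz inequality $sf(x,s)\ge\mu F(x,s)$ with $\mu>2$ must fail near $s=0$, since there $F\sim\tfrac p2 u_k^{p-1}s^2$ and $sf\sim p\,u_k^{p-1}s^2$, and the resulting defect $\mu F-sf=O(u_k^{p-1}s^2)$ is neither bounded (the weight $u_k^{p-1}$ is singular at the origin) nor removed by restricting to $s\ge s_0$. The estimate one actually needs is the pointwise inequality $t f(s,t)-(2+c_p)F(s,t)\ge-\tfrac{c_p p}{2}s^{p-1}t^2$ (as in \cite{NS}), so that the small-$s$ defect is a quadratic form in the weight $u_k^{p-1}$, which the stability estimate (\ref{e.3.1}) then dominates. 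You gesture at this in your final paragraph, but the assertion that the defect is a ``bounded remainder'' that is ``absorbed'' as written is not correct; the precise Naito--Sato-type bound and the stability gap are what carry the (PS) argument.
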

\begin{proof}
We would like to employe the Mountain Pass theorem to look for the weak solution of (\ref{eq 4.1}).  A function $v$ is said to be a  weak solution of (\ref{eq 4.1}) if
\begin{equation}\label{weak 1}
\langle u,\xi\rangle_\alpha =\int_{\Omega}\left[ (u_{k}+u_+)^{p}- u_{k}^p\right]\xi \,dx,\quad \forall\xi\in  H^\alpha_0(\Omega).
\end{equation}
The natural functional associated to (\ref{eq 4.1}) is the following
\begin{equation}\label{E}
E(v)=\frac12\norm{v}_\alpha^2-\int_{\Omega}F(u_{k},v_+)\,dx, \quad\forall v\in  H^\alpha_0(\Omega),
\end{equation}
where
\begin{equation}\label{F}
 F(s,t)=\frac1{p+1}\left[(s+t_+)^{p+1}-s^{p+1}-(p+1)s^pt_+\right].
\end{equation}

We observe that for any $\epsilon>0$, there exists some $c_{\epsilon}>0$, depending only on $p$, such that
$$0\le  F(s,t) \le (p+\epsilon)s^{p-1}t^2+c_{\epsilon}t^{p+1},\quad s,t\ge0$$
By  we have that for any $v\in  H^\alpha_0(\Omega)$,
\begin{eqnarray*}
 \int_{\Omega}F(u_{k},v_+) \,dx  &\le& (p+\epsilon)\int_{\Omega}u_{k}^{p-1}v_+^2 \,dx +c_{\epsilon}\int_{\Omega}v_+^{p+1} \,dx  \\
    &\le& c_{22}\norm{v}_\alpha^2,
\end{eqnarray*}
then $E$ is well-defined in $  H^\alpha_0(\Omega)$.

We observe that $E(0)=0$ and let $v\in  H^\alpha_0(\Omega)$ with $\norm{v}_{ \alpha}=1$, then
 for  $k\in(0,k^*)$, choosing $\epsilon>0$ small enough, it infers from (\ref{e.3.1}) that
\begin{eqnarray*}
   E(tv) &= & \frac12t^2\norm{v}_\alpha^2- \int_{\Omega}F(u_{k},tv_+)\,\,dx \\
   &\ge&t^2\left(\frac12\|v\|_\alpha^2- (p+\epsilon) \int_{\Omega}  v_k^{p-1} v^2 \,dx\right)- c_{23} t^{p+1}\int_{\Omega}   |v|^{p+1} \,dx
   \\
   &\ge &c_{24} t^2\|v\|_\alpha^2 - c_{23} t^{p+1} \|v\|_\alpha^{p+1}
    \\
   &\ge & \frac{c_{24}}2t^2-c_{23}t^{p+1},
\end{eqnarray*}
where $c_{23},\, c_{24}>0$ depend on $k,k^*$ and we used   (\ref{3.1}) in the first inequality.
So there exists   $\sigma_0>0$  small, then  for $\|v\|_{ H^\alpha_0(\Omega)} =1$,  we have
$$ E(\sigma_0 v)\ge    \frac{c_{24}} 4\sigma_0^2=:\beta>0.$$

 We take a nonnegative function $v_0\in  H^\alpha_0(\Omega)$ and then
 $$ F(u_{k},tv_0)\ge \frac{1}{p+1}t^{p+1} v_0^{p+1}-tu_k^pv_0.$$
  Since the space of $\{tv_0: t\in\R\}$ is
a subspace of $ H^\alpha_0(\Omega)$ with dimension 1 and all the norms are equivalent, then $\int_{\Omega} V_0v_0(x)^{p+1} \,dx>0$.
Then there exists $t_0>0$ such that for $t\ge t_0$,
\begin{eqnarray*}
  E(tv_0)&=& \frac {t^2}2\|v_0\|_\alpha^2- \int_{\Omega}   F(u_{k},tv_0)  \,dx\\
   &\le &  \frac {t^2}2\|v_0\|_\alpha^2-c_{24}t^{p+1} \int_{\Omega}   v_0^{p+1} \,dx+t\int_{\Omega}   u_{k}^{p}v_0 \,dx
  \\ &\le &c_{25}(t^2+t-t^{p+1})\le  0,
\end{eqnarray*}
where $c_{24},c_{25}>0$. We choose $e=t_0v_0$, we have $E(e)\le0$.

We next prove that $E$ satisfies $(PS)$ condition. We say that $ E$ has $P.S.$ condition, if for any sequence
$\{v_n\}$ in $ H^\alpha_0(\Omega)$ satisfying $ E(v_n)\to c$ and $ E'(v_n)\to0$ as $n\to\infty$, there
is a convergent subsequence. Here the energy level $c$ of functional $E$ is characterized by
\begin{equation}\label{c}
 c=\inf_{\gamma\in \Upsilon}\max_{s\in[0,1]}E(\gamma(s)),
\end{equation}
where $\Upsilon =\{\gamma\in C([0,1]:\, H^\alpha_0(\Omega)):\gamma(0)=0,\ \gamma(1)=e\}$. We observe that
$$c\ge \beta.$$

Let $\{v_n\}$ in $ H^\alpha_0(\Omega)$ satisfying $ E(v_n)\to c$ and $ E'(v_n)\to0$ as $n\to\infty$, then we only have to show that there are a subsequence, still denote it by  $\{v_{n}\}$
 and $v\in  H^\alpha_0(\Omega)$ such that
$$v_{n}\to v\quad {\rm in}\ \ L^2(\Omega,  u_{k}^{p-1}\,dx)\quad{\rm and}\ \ L^{p+1}(\Omega) \quad {\rm as}\ n\to\infty.$$

For some $c_{21}>0$, we have that
$$
 c_{21}\|w\|_\alpha\ge E'(v_n)w
   =   \langle v_n, w\rangle_{\alpha} -\int_{\Omega}  f(u_{k}, (v_n)_+)w\,dx
$$
and
\begin{eqnarray}\label{4.2}
 c+1\ge E(v_n) = \frac12\|v_n\|^2_\alpha - \int_{\Omega}  F(u_{k}, (v_n)_+)\,dx.
\end{eqnarray}

Let $c_p=\min\{1,p-1\}$, then it follows by \cite[C.2 $(iv)$]{NS}  that
$$f(s,t)t-(2+c_p)F(s,t)\ge -\frac{c_pp}{2}s^{p-1}t^2,\quad s,t\ge0,$$
thus $(2+c_p)\times $(\ref{4.2})-$\langle E'(v_n),(v_n)_+\rangle$ implies that
\begin{eqnarray*}
c+ c_{21}\|v_n\|_\alpha&\ge&  \frac{c_p}2\|v_n\|^2_\alpha -\int_{\Omega} \left[(2+c_p)F(u_{k},(v_n)_+)-f(u_{k},(v_n)_+)(v_n)_+ \right]\,dx   \\
    &\ge &  \frac{c_p}2 \left[\|v_n\|^2_\alpha-p \int_{\Omega}  u_{k}^{p-1}v_n^2\,dx\right]
    \\&\ge&  c_{26} \frac{c_p}2\|v_n\|^2_\alpha,
\end{eqnarray*}
where $c_{26}>0$.
Therefore, we derive that $v_n$ is uniformly bounded in $ H^\alpha_0(\Omega)$ for $k\in(0,k^*)$.

Thus  there exists a subsequence $\{v_{n}\}$ and $v$ such that
$$v_{n}\rightharpoonup v\quad{\rm in}\quad  H^\alpha_0(\Omega), $$
$$v_{n}\to v\quad{\rm a.e.\ in}\ \Omega\quad{\rm and\ \ in}\quad L^{p+1}(\Omega),\ \ L^{2}(\Omega,  u_{k}^{p-1}\,dx),   $$
when $n\to \infty$. Here we have used that
$u_{k}^{p-1}\le c_{27}|x|^{(2\alpha-N)(p-1)},$
where $(2\alpha-N)(p-1)>-2\alpha$ and by Proposition \ref{pr 2.2} the embedding: $H^\alpha_0(\Omega)\hookrightarrow L^q(\Omega,  |x|^{(2\alpha-N)(p-1)}\,dx)$
is compact for $q\in[1, \frac{2N+2(2\alpha-N)(p-1)}{N-2\alpha})$,  particularly, for $q=2$.

We observe that
\begin{eqnarray*}
&&|F(u_{k},v_n)- F(u_{k},v)|
\\&&\qquad= \frac1{p+1}|(u_{k}+(v_n)_+)^p-(u_{k}+v_+)^p-(p+1)u_{k}^p((v_n)_+-v_+)| \\
   &&\qquad\le c_{28}u_{k}^{p-1} ((v_n)_+-v_+)^2+c_{28}((v_n)_+-v_+)^{p+1},
\end{eqnarray*}
which implies that
$$F(u_{k},v_n)\to F(u_{k},v)\quad{\rm a.e.\ in}\ \Omega\quad{\rm and\ \ in}\quad L^1(\Omega).$$
Then, together with $\lim_{n\to\infty} E(v_{n})=c$,
we have that $\|v_{n}\|_\alpha\to \|v\|_\alpha$ as $n\to\infty$.
Then we obtain that
$v_{n}\to v$ in $ H^\alpha_0(\Omega)$ as $n\to\infty$.

Now Mountain Pass Theorem  (for instance, \cite[Theorem~6.1]{struwe}; see also \cite{rabinowitz}) is applied to obtain that
there exists a critical point $v\in  H^\alpha_0(\Omega)$
 of $ E$ at some value $c\ge \beta>0$. By $\beta>0$, we have that $v$ is nontrivial and nonnegative. Then $v$ is a positive weak solution of $v$
of (\ref{eq 4.1}).  By using bootstrap argument in \cite{HL}, the interior regularity of $v$ could be improved to be in $ H^\alpha_0(\Omega)\cap C^2(\Omega\setminus\{0\})$, since $u_{k}$ is locally bounded in $\Omega\setminus\{0\}$ and $p<\frac{N}{N-2\alpha}$. Since
$$ u_{k}^{p-1}(x)\le c_{27}|x|^{(N-2\alpha)(p-1)},\quad \forall x\in B_1(0),$$
with $p<\frac{N}{N-2\alpha}$, we have that there is some $q>\frac N{2\alpha}$ such that
$$ u_{k}^{p-1}\in L^q(B_1(0)),$$
so $v_k$ is bounded at the origin.  Moreover, by Maximum Principle, we conclude that $v>0$ in  $\Omega$.
\end{proof}

\medskip
\noindent{\it Proof of the existence Mountain-Pass type solution in Theorem \ref{teo 1}.} From Proposition \ref{pr 4.1}, we obtain that there is a  positive weak solution of $v_k$
of (\ref{eq 4.1}), then $v_k$ is weak solution of (\ref{eq 4.1}) and it holds that
$$
\int_{\Omega} v_k(-\Delta)^\alpha  \xi \,dx = \int_{\Omega}    \left[(u_{k}+v_k)^{p}-u_{k}^p\right]\xi \,dx, \quad \forall \xi\in C^\infty_c(\Omega).
$$
then $(u_{k}+v_k)$ satisfies
$$
\int_{\Omega} (u_{k}+v_k)(-\Delta)^\alpha  \xi \,dx = \int_{\Omega}  (u_{k}+v_k)^{p}\xi \,dx +k\xi(0), \quad \forall \xi\in C^\infty_c(\Omega).
$$
This means that $v_k +u_{k}$ is weak solution of  (\ref{eq 1.2}) such that  $v_k+u_{k}> u_{k}$ and $v_k+u_{k}$
to $C^2$ locally in $\Omega\setminus\{0\}$.
\hfill$\Box$

\bigskip

\bigskip

\end{document}